\newtheorem{example}{Example}
\newtheorem{remark}{Remark}
\newtheorem{proposition}{Proposition}
\newtheorem{corollary}{Corollary}
\newcommand{\R}{\mathbb R}
\renewcommand{\P}{\mathbb P}
\newcommand{\M}{\mathbb M}
\newcommand{\bbf}{\mathbf{f}}
\newcommand{\bbg}{\mathbf{g}}
\newcommand{\bbu}{\mathbf{u}}
\newcommand{\bbv}{\mathbf{v}}
\newcommand{\bbm}{\mathbf{m}}
\newcommand{\bF}{\mathbf{F}}
\newcommand{\dpar}[2]{\dfrac{\partial #1}{\partial #2}}
\DeclareSymbolFont{matha}{OML}{txmi}{m}{it}
\DeclareMathSymbol{\varv}{\mathord}{matha}{118}
\begin{document}
\title{A new local and explicit kinetic method for linear and non-linear convection-diffusion problems with finite kinetic speeds: \\ I. One-dimensional case}
\author{Gauthier Wissocq\footnote{corresponding author}, R\'emi Abgrall\\
Institute of Mathematics, University of Z\"urich, Switzerland\\
gauthier.wissocq@math.uzh.ch, remi.abgrall@math.uzh.ch}
\date{}
\maketitle
\begin{abstract}
We propose a numerical approach, of the BGK kinetic type, that is able to approximate with a given, but arbitrary, order of accuracy the solution of linear and non-linear convection-diffusion type problems: scalar advection-diffusion, non-linear scalar problems of this type and the compressible Navier-Stokes equations.
Our kinetic model can use \emph{finite} advection speeds that are independent of the relaxation parameter, and the time step does not suffer from a parabolic constraint. Having finite speeds is in contrast with many of the previous works about this kind of approach, and we explain why this is possible: paraphrasing more or less \cite{golse:hal-00859451}, the convection-diffusion like PDE is  not a limit of the BGK equation, but a correction of the same PDE without the parabolic term  at the second  order in the relaxation parameter that is interpreted as Knudsen number. We then show that introducing a matrix collision instead of the well-known BGK relaxation makes it possible to target a desired convection-diffusion system.

Several numerical examples, ranging from a simple pure diffusion model to the compressible Navier-Stokes equations illustrate our approach.
\end{abstract}

\section{Introduction}
We are interested in the approximation of linear and non-linear advection-diffusion equations using kinetic methods. Typically, this problem is addressed by considering models of the Jin-Xin type in the so-called diffusion limit~\cite{Jin, Jin1998, Jin2000, Naldi2000, Peng2020}. A representative example of such methods is expressed as follows:
\begin{align}
	& \frac{\partial u^\varepsilon}{\partial t} + \frac{\partial v^\varepsilon}{\partial x} = 0, \label{eq:u} \\
	& \frac{\partial v^\varepsilon}{\partial t} + \frac{1}{\varepsilon^2} \frac{\partial p(u^\varepsilon)}{\partial x} = \frac{1}{\varepsilon^2} (f(u^\varepsilon) - v^\varepsilon), \label{eq:v}
\end{align}
with $p'(u^\varepsilon) >0$ and where $\varepsilon$ is a smallness parameter referred to as the Knudsen number. Note that the above equations are written in dimensionless form, which justifies the fact that $\varepsilon$ has no dimension. In the diffusion limit as $\varepsilon \rightarrow 0$, the solution $u^\varepsilon$ formally converges to the solution of the following equation: 
\begin{align}\label{eq:v:para}
	\frac{\partial u}{\partial t} + \frac{\partial f(u)}{\partial x} = \frac{\partial^2 p(u)}{\partial x^2}.
\end{align}

Various approaches have been considered to  solve numerically this kinetic model in the literature. First, it is noteworthy that in the particular case of linear diffusion (where $p'(u^\varepsilon)$ is constant) diagonalizing the left-hand-side (transport) term of~(\ref{eq:u})-(\ref{eq:v}) allows us to write it as the following advection-relaxation system,
\begin{align}
	\frac{\partial}{\partial t} 
	\begin{bmatrix}
		f_1 \\ f_2
	\end{bmatrix}
	+ 
	\begin{bmatrix}
		-a & 0 \\ 0 & a
	\end{bmatrix}
	\frac{\partial}{\partial x}
	\begin{bmatrix}
		f_1 \\ f_2
	\end{bmatrix}
	=
	\frac{1}{\varepsilon^2}
	\begin{bmatrix}
		\mathbb{M}_1 - f_1 \\ \mathbb{M}_2 - f_2
	\end{bmatrix},
	\label{eq:intro_kinetic_model}
\end{align}
where $u^\varepsilon = f_1+f_2$, $v^\varepsilon = a(-f_1 + f_2)$, $a=\sqrt{p'}/\varepsilon$, $\mathbb{M}_1 = (u^\varepsilon - f(u^\varepsilon)/a)/2$ and $\mathbb{M}_2 = (u^\varepsilon + f(u^\varepsilon)/a)/2$. Note that $\mathbb{M}_1$ and $\mathbb{M}_2$ are commonly referred to as Maxwellian functions by analogy with the kinetic theory of gases. A first possibility is therefore to treat the left-hand-side term (advection at velocity $\pm a$) using an explicit scheme and the right-hand-side term (stiff relaxation) using an implicit scheme. An important problem of this approach is that the advection velocities scale as $1/\varepsilon$. As a consequence, the numerical stability constraint reads $\Delta t = \mathcal{O}(\varepsilon \Delta x)$, which is, in the diffusive limit where $\varepsilon < \Delta x$, more restrictive than the common parabolic constraint $\Delta t = \mathcal{O}(\Delta x^2)$~\cite{Jin1998, Boscarino2013, Peng2020}. 

To circumvent this issue, most previous work focused on the use of so-called partitioned schemes, where the stiff hyperbolic part is split into an explicit (non-stiff) term, and an implicit (stiff) term~\cite{Jin1998, Klar1998, Jin2000, Naldi2000, Jin2001, Aregba-Driollet2003, Lemou2008, LAFITTE20171, Jang2014, Peng2021}. Note that using a diagonally-implicit Runge-Kutta (DIRK) scheme for (\ref{eq:v}), the implicitness becomes linear and can be easily inverted since all the non-linear functions of $u^\varepsilon$ are known. In a sense, the fact of considering (a part of) the advection of $v^\varepsilon$ as a stiff term can be viewed as the introduction of space derivatives in the Maxwellian~\cite{Jin1998}. 
This consideration leads to two difficulties met by these approaches. The first one, reported in~\cite{Jin1998}, is the complexity of building stable implicit-explicit (IMEX) schemes for solving such systems. It is known that kinetic models in the form of~(\ref{eq:intro_kinetic_model}) are compatible with entropy inequalities when $\mathbb{M}_1$, $\mathbb{M}_2$ are monotone in the sense of~\cite{Bouchut}. In the  case of the standard Xin-Jin model, this condition is equivalent to Whitham subcharacteristic condition~\cite{Whitham1974, Jin}. However, when $\mathbb{M}_1$ and $\mathbb{M}_2$ depend on gradients, this property may be lost, which can explain a degraded robustness. The second problem, as shown in \cite{Boscarino2013}, is that such schemes suffer from a parabolic stability condition $\Delta t = \mathcal{O}(\Delta x^2)$. To solve this defect, the authors proposed a new partitioned model for the evolution of $u^\varepsilon$, which allowed them to successfully recover the hyperbolic CFL restriction $\Delta t = \mathcal{O}(\Delta x)$~\cite{Boscarino2013, Boscarino2014}. However, the drawback of this approach is the use of implicit methods to treat space gradients, which can be extremely costly in terms of computational time because large matrices have to be inverted~\cite{Bouchut2018}.
 
The main issue of the aforementioned approaches arises from the dependence of the characteristic velocities in $1/\varepsilon$. It is yet possible to consider another paradigm by noticing the way the Navier-Stokes equations can be derived from the Boltzmann equation in the kinetic theory of gases. With a Bhatnagar-Gross-Krook (BGK) collision operator~\cite{Bhatnagar1954}, the Boltzmann equation reads, in a dimensionless form~\cite{Boltzmann1872, golse:hal-00859451, Golse2021},
\begin{align}\label{eq:BGK}
	\frac{\partial f}{\partial t}(\boldsymbol{x}, \boldsymbol{\xi}, t) + \boldsymbol{\xi} \cdot \boldsymbol{\nabla}_{\boldsymbol{x}} f(\boldsymbol{x}, \boldsymbol{\xi}, t) = \frac{1}{\varepsilon} (f^{eq}(\boldsymbol{u}(\boldsymbol{x},t), \boldsymbol{\xi}) - f(\boldsymbol{x}, \boldsymbol{\xi}, t)),
\end{align} 
where $f:(\mathbb{R}^D \times \mathbb{R}^D \times \mathbb{R}^+) \mapsto \mathbb{R}^+$ is a referred to as a population related to the distribution of particles located at a position $\boldsymbol{x}$ in space, at time $t$ and moving with a microscopic velocity $\boldsymbol{\xi}$, $\varepsilon$ is the Knudsen number, $\boldsymbol{u}$ is the vector of conserved variables defined as
\begin{align}
	\boldsymbol{u}(\boldsymbol{x}, t) = \int_{\mathbb{R}^D} \left[ 1, \boldsymbol{\xi}, \frac{1}{2} ||\boldsymbol{\xi}||^2 \right]^T f(\boldsymbol{x}, \boldsymbol{\xi}, t) \, \mathrm{d}^D \boldsymbol{\xi},
\end{align}
and $f^{eq}$ is an equilibrium state usually considered as the Maxwell-Boltzmann distribution function~\cite{MAXWELL_PTRSL_157_1867}. It is paramount to notice that in the Boltzmann equation, only the collision term behaves as a stiff term, the advection velocities $\boldsymbol{\xi}$ being an additional variable of the system. Yet, it is possible to approximate the Boltzmann equation, at least formally, by the Navier-Stokes equations, including second-order diffusive terms. This is achieved by a introducing a first-order correction in $\varepsilon$ to the Euler equations, which is the purpose of the Chapman-Enskog expansion~\cite{Chapman1953}. On the contrary, all the aforementioned models based on the prototype (\ref{eq:u})-(\ref{eq:v}) target a desired PDE in the diffusion limit $\varepsilon \rightarrow 0$, which is very different. Interestingly, the hydrodynamic limits of the Boltzmann equation can be preserved by replacing the velocity space $\boldsymbol{\xi} \in \mathbb{R}^D$ by a finite set of discrete velocities $\boldsymbol{\xi}_i$, giving birth to the so-called discrete-velocity Boltzmann equations (DVBE)~\cite{Grad1949,Gatignol,Cabannes1,SHAN2006, Philippi2006}. The latter share many similarities with the diagonalized system (\ref{eq:intro_kinetic_model}), where $a$ has to be replaced by constant, arbitrary selected, discrete velocities, independent of $\varepsilon$. The fact that the advection velocities are constant makes it possible to build very simple numerical methods for solving the DVBE, which has notably made the great success of the lattice Boltzmann method (LBM), based on a simple collide and stream algorithm~\cite{Kruger2017}. The main issue of this method is its lack of numerical stability in the inviscid limit ($\varepsilon \rightarrow 0$) and for high-Mach compressible flows~\cite{Dellar2002, Siebert2008a, Wissocq2019, Coreixas2020}. This defect can be attributed to the fact that the DVBE is hardly compatible with entropy properties~\cite{Wagner1998}, even though many efforts have been devoted to recover a discrete counterpart of Boltzmann's H-theorem for the LBM~\cite{Karlin1998, Boghosian2001, Ansumali2003, Frapolli2015, ATIF_PRL_119_2017, Latt2020}. On the contrary, with a system \emph{\`a la} Xin-Jin, it is easy to find a Maxwellian that is compatible with a whole family of Lax entropies~\cite{Bouchut2018}.

The purpose of this paper is to introduce a new kinetic model for convection-diffusion problems that allows for hyperbolic stability conditions with $\Delta t = \mathcal{O}(\Delta x)$. This achievement is made possible by two innovative ideas. First, instead of targeting a desired PDE in the limit of a vanishing relaxation parameter, as is commonly done in the diffusion limit of kinetic systems, we want to recover the diffusive flux as the first-order term of an asymptotic expansion in a smallness parameter $\varepsilon$ referred to as the Knudsen number. This is strongly inspired by the way the Chapman-Enskog expansion is performed in kinetic theory. Secondly, we demonstrate that it is possible to control the diffusion of the $\mathcal{O}(\varepsilon)$-related terms to target a desired advection-diffusion system, with kinetic velocities that are independent of the Knudsen number. This involves modifying the BGK collision operator, in a way that is similar to the multiple relaxation times (MRT) that are well known in the LBM community~\cite{DHumieres1994, Lallemand2000, DHumieres2002}. By using an adequate time and space discretization, we show how it is possible to build robust numerical methods with Courant-Friedrichs-Lewy~\cite{Courant1967} (CFL) numbers close to unity without inverting large matrices in space. In this paper, we illustrate the methodology in the one-dimensional case. The extension to multi-dimensions, which requires additional considerations in the construction of the collision matrix, will be addressed in a forthcoming article.

It may seem counter-intuitive, and even in contradiction with previous works, to claim that we can construct methods with finite speeds of propagation while in previous works, special care has to be taken to overcome the issue of non bounded propagation speed. When considering \eqref{eq:v}, we look for method able to handle the limit case $\varepsilon \rightarrow 0$, because the problem \eqref{eq:v:para} is obtained in this limit. Hence one needs to be able to approximate correctly \eqref{eq:v} in this limit. In our work, we try to approximate the Chapman-Enskog expansion of \eqref{eq:BGK} (or more precisely a modification of it) for finite but non zero values of $\varepsilon$ in order to recover correctly the first terms of the development. The modification is constructed such that these first terms are exactly \eqref{eq:v:para}. The two approaches are very different.

The format of this paper is as follow. We begin by stating the problem and revisiting the hyperbolic models \emph{\`a la} Xi-Jin. Performing a Chapman-Enskog-like expansion, we observe that these models, at the leading order, resemble a parabolic equation with a very specific diffusive term. This leads us to propose a modification of the BGK relaxation term, in such a way that the true dissipative operator can be recovered for systems of equations. We show that this is always possible, modulo a standard sub-characteristic condition. This approach is applied to both scalar problems and systems. We explicitly construct the collision term for several wave models. Subsequently, we delve into the study of time discretization, employing a deferred correction IMEX method, and present some numerical results. Notably, we show that the correct entropy production is obtained for an exact solution of the Navier-Stokes equations.

\section{Problem statement}
\label{sec:problem_statement}

We are given the one-dimensional partial differential equation
\begin{align}
	    \frac{\partial \mathbf{u}}{\partial t} + \frac{\partial \mathbf{f}(\mathbf{u})}{\partial x} = \frac{\partial}{\partial x} \left( \mathbf{D} \frac{\partial \mathbf{u}}{\partial x} \right),\label{eq:transport_diffusion}
\end{align}
with $\mathbf{u} \in \mathbb{R}^p$, $\mathbf{f}: \mathbb{R}^p \rightarrow \mathbb{R}^p$ a Lipschitz continuous convective flux and $\mathbf{D}=\mathbf{D}(\mathbf{u})$ a $(p \times p)$ matrix which aims at introducing a diffusive flux in the transport equation. We assume that system \eqref{eq:transport_diffusion} can be written as a symmetric advective-diffusive system, meaning that there exists a strictly convex entropy $\eta = \eta(\mathbf{u})$ together with an entropy variable $\bbv = \nabla_{\bbu} \eta$ that symmetrizes it~\cite{Harten1983, mallet}. Eventually left-multiplying (\ref{eq:transport_diffusion}) by $\bbv^T$, where superscript $T$ denotes transpose, this reads
\begin{align}
    \dpar{\eta}{t} + \dpar{g(\bbu)}{x} = \bbv^T \dpar{}{x} \left( \mathbf{D} \dpar{\bbu}{x} \right),
\end{align}
where $g$ is the entropy flux defined by its gradient $(\nabla_{\bbu} g)^T = \bbv^T \nabla_{\bbu} \bbf$. Denoting $\mathbf{A}_0 = \bbv'(\bbu)$ the Hessian matrix of $\eta$ (which is positive definite thus invertible since $\eta$ is strictly convex) and assuming that $\mathbf{D} \mathbf{A}_0^{-1}$ is symmetric positive semi-definite~\cite{mallet}, we have
\begin{align}
   \bbv^T \dpar{}{x} \left( \mathbf{D} \dpar{\bbu}{x} \right) =
\dpar{}{x} \left( \bbv^T \mathbf{D} \dpar{\bbu}{x} \right) -\dpar{\bbv}{x}^T \mathbf{D} \mathbf{A}_0^{-1} \dpar{\bbv}{x} \leq \dpar{}{x} \left( \bbv^T \mathbf{D} \dpar{\bbu}{x} \right).
\end{align}
This leads to
\begin{align}
    \dpar{\eta}{t}+\dpar{\bbg(\bbu)}{x} - \dpar{}{x} \left( \bbv^T \mathbf{D} \dpar{\bbu}{x} \right) \leq 0,
\end{align}
which, when applied to the Navier-Stokes system of equations for gas dynamics, leads to the Clausius-Duhem inequality~\cite{mallet}. A last remark is that, since 
\begin{align}
    \mathbf{D} = \mathbf{A}_0^{-1/2} \left( \mathbf{A}_0^{1/2} \mathbf{D} \mathbf{A}_0^{-1} \mathbf{A}_0^{1/2} \right) \mathbf{A}_0^{1/2},
\end{align}
then $\mathbf{D}$ is similar to the symmetric positive semi-definite matrix $\mathbf{A}_0^{1/2} \mathbf{D} \mathbf{A}_0^{-1} \mathbf{A}_0^{1/2}$, sharing all its eigenvalues. Hence, $\mathbf{D}$ has real non-negative eigenvalues.

Compared to numerical methods for hyperbolic systems obeying a stability condition $\Delta t=\mathcal{O}(\Delta x)$, the presence of second-order derivatives in the diffusion term of (\ref{eq:transport_diffusion}) introduces a parabolic stability constraint $\Delta t = \mathcal{O}(\Delta x^2)$ when it is explicitly solved. To overcome this limitation, we want to deal with a kinetic model involving first-order derivatives only, with arbitrarily fixed velocities, and accounting for diffusion through a purely local relaxation term. We first recall the kinetic model adopted in \cite{Jin,AregbaNatalini} and subsequently in \cite{Torlo} to solve the PDE (\ref{eq:transport_diffusion}) when $\mathbf{D} = \mathbf{0}$.

\subsection{Kinetic model for hyperbolic equations}

In \cite{Jin,AregbaNatalini}, the following BGK model is considered to solve (\ref{eq:transport_diffusion}) with $\mathbf{D}=\mathbf{0}$ (hyperbolic transport equation):
\begin{align}
    \frac{\partial \mathbf{F}}{\partial t} + \Lambda \frac{\partial \mathbf{F}}{\partial x} = \frac{\mathbb{M}(\mathbb{P} \mathbf{F}) - \mathbf{F}}{\tau},
    \label{eq:kinetic_equation}
\end{align}
where $\mathbf{F} \in \mathbb{R}^{kp}$, $k$ is the number of waves of the kinetic model, $\Lambda$ is a diagonal matrix, constant in space and time, $\mathbb{M}$ plays the role of a Maxwellian,  $\mathbb{P}$ is a linear operator such that $\mathbb{P}\mathbb{M} (\mathbb{P} \mathbf{F}) = \mathbb{P} \mathbf{F}$. The parameter $\tau$ plays the role of a the relaxation time. To study this kinetic system, it is important to understand how we can introduce a Knudsen number. We do it here drawing inspiration from the kinetic theory of gases, which can for example be found in~\cite{golse:hal-00859451, Golse2021}. Looking at \eqref{eq:kinetic_equation} we see that if we multiply $\bF$ by some factor, provided that the Maxwellian is homogeneous of degree 1 in $\bF$, nothing changes. All the models of Maxwellians satisfy this property. Choose now a characteristic length of the problem under consideration $\ell$ and a characteristic time $\theta$. We define dimensionless time, space and velocity matrix as
\begin{align}
	t^* = \frac{t}{\theta}, \qquad x^* = \frac{x}{\ell}, \qquad \Lambda^* = \frac{\Lambda}{||\Lambda||},
\end{align}
where $||\Lambda||$ is the $L^2$ norm of the diagonal matrix $\Lambda$, i.e. the maximum of the absolute values of the diagonal entries. Eq.~\eqref{eq:kinetic_equation} becomes
\begin{align}
	\frac{1}{\theta} \dpar{\bF}{t^*} + \frac{||\Lambda||}{\ell} \Lambda^* \dpar{\bF}{x^*} = \frac{\mathbb{M}(\mathbb{P} \mathbf{F}) - \mathbf{F}}{\tau}.
\end{align}
If we want to solve the same problem, we need to set $\theta=\ell/||\Lambda||$ and define a Knudsen number $\varepsilon$ as
\begin{align}
	\varepsilon \equiv \frac{||\Lambda|| \tau}{\ell}, 
\end{align}
so that the dimensionless form of \eqref{eq:kinetic_equation} reads
\begin{equation}
    \frac{\partial \mathbf{F}}{\partial t^*} + \Lambda^* \frac{\partial \mathbf{F}}{\partial x^*} = \frac{\mathbb{M}(\mathbb{P} \mathbf{F}) - \mathbf{F}}{\varepsilon}.
    \label{eq:kinetic_equation_adim}
\end{equation}
Doing this scaling, we see that we can compare $\big ( \mathbb{M}(\mathbb{P} \mathbf{F})-\bF)$ and 
$\Lambda^* \partial \bF/\partial x^*$ because they have the same dimensions. This is notably the purpose of the Chapman-Enskog expansion. When $\varepsilon \ll 1$, $\bF$ remains close to $\M(\P \bF)$, while when $\varepsilon \centernot{\ll} 1$, perturbations about the Maxwellian have to be considered.

Another form of \eqref{eq:kinetic_equation}, maybe less familiar, is
\begin{align}
	\dpar{\bF}{t} + \Lambda \dpar{\bF}{x} = \frac{||\Lambda||}{\ell} \frac{\mathbb{M}(\mathbb{P} \mathbf{F}) - \mathbf{F}}{\varepsilon},
	\label{eq:kinetic_equation_retained}
\end{align}
which is the form of BGK system we adopt in the rest of this section.

\begin{remark}
This kind of consideration never appears in above mentioned references 
   because these authors want to work in the limit $\varepsilon\rightarrow 0$, i.e. $\tau\rightarrow 0$. In our case, we need to work in the case of a finite but small $\varepsilon$. By itself, small is meaningless. Small is small with respect to something else only. This is the reason why we need to define $\varepsilon$.
\end{remark}

It can be shown that when the transport matrix $\Lambda$ and the Maxwellian $\mathbb{M}$ are related to the convective flux $\mathbf{f}$ as $\mathbb{P} \Lambda \mathbb{M} (\mathbb{P} \mathbf{F}) = \mathbf{f}(\mathbb{P} \mathbf{F})$, then the hyperbolic system (\ref{eq:transport_diffusion}) 
 with $\mathbf{D}=\mathbf{0}$ is the formal limit of (\ref{eq:kinetic_equation_retained})
  when $\varepsilon \rightarrow 0$, with $\mathbf{u}=\mathbb{P} \mathbf{F}$. Following \cite{Bouchut}, the choice of $\Lambda$ is made such that the eigenvalues of $\mathbb{M}$ with respect to $\mathbf{u}$ are in $\mathbb{R}^+$: this fundamental property ensures the existence of an entropy for the kinetic system.

\begin{example}[scalar conservation equation]
\label{ex:scalar}
    The simplest example is a two-wave model ($k=2$) for solving a scalar conservation equation ($p=1$). We take
\begin{align}
    \mathbf{F} = 
    \begin{pmatrix}
        f_1 \\ f_2
    \end{pmatrix}, \qquad
    \Lambda =
    \begin{pmatrix}
        -a & 0 \\ 0 & a
    \end{pmatrix}, \qquad
    \mathbb{P} = 
    \begin{pmatrix}
        1 & 1
    \end{pmatrix} \qquad \mathrm{and} \qquad
    \mathbb{M}= 
    \begin{pmatrix}
        \mathbb{M}_1 \\ \mathbb{M}_2
    \end{pmatrix},
\end{align}
with $a>0$ and with
\begin{align}
    \begin{cases}
    	\mathbb{P} \mathbb{M} = \mathbb{P} \mathbf{F} \equiv u^\varepsilon, \\
    	\mathbb{P} \Lambda \mathbb{M} =f(u^\varepsilon),
    \end{cases}
    \Rightarrow
    \begin{cases}
    	\mathbb{M}_1+\mathbb{M}_2=f_1 + f_2 \equiv u^\varepsilon, \\
    	a(-\mathbb{M}_1 + \mathbb{M}_2) = f(u^\varepsilon).
    \end{cases}
\end{align}
These two conditions are sufficient to construct a Maxwellian. The system (\ref{eq:intro_kinetic_model}) is recovered with constant kinetic speeds, independent of the relaxation parameter. We also know that when $a$ is chosen such that $|f'(u^\varepsilon)| <a $ (subcharacteristic condition), the two-wave model becomes compatible with entropy inequalities~\cite{Jin, Bouchut}.
\end{example}

\begin{example}[Euler equations for fluid dynamics] \label{ex:Euler}
    Another example is a two-wave model ($k=2$) for the 1D Euler equations for fluid dynamics, ensuring the conservation of mass $\rho$, momentum $j$ and energy $E$ ($p=3$). We define
\begin{align}
    \mathbf{F} = 
    \begin{pmatrix}
        \rho_1 \\ j_1 \\ E_1 \\ \rho_2 \\ j_2 \\Â E_2
    \end{pmatrix}, \qquad \Lambda = 
    \begin{pmatrix}
        -a & 0 & 0 & 0 & 0 & 0 \\
        0 & -a & 0 & 0 & 0 & 0 \\
        0 & 0 & -a & 0 & 0 & 0 \\
        0 & 0 & 0 & a & 0 & 0 \\
        0 & 0 & 0 & 0 & a & 0 \\
        0 & 0 & 0 & 0 & 0 & a
    \end{pmatrix}, \qquad \mathbb{P} = 
    \begin{pmatrix}
        1 & 0 & 0 & 1 & 0 & 0 \\
        0 & 1 & 0 & 0 & 1 & 0 \\
        0 & 0 & 1 & 0 & 0 & 1
    \end{pmatrix}, \qquad 
    \mathbb{M} = 
    \begin{pmatrix}
        \mathbb{M}_1^\rho \\ \mathbb{M}_1^j \\ \mathbb{M}_1^E \\ \mathbb{M}_2^\rho \\ \mathbb{M}_2^j \\ \mathbb{M}_2^E
    \end{pmatrix},
\end{align}
with $a>0$ and with
\begin{align}
\begin{cases}
	\mathbb{P} \mathbb{M}= \mathbb{P} \mathbf{F} \equiv \mathbf{u}^\varepsilon, \\ \newline \\
	\mathbb{P} \Lambda \mathbb{M}=\mathbf{f}(\mathbf{u}^\varepsilon),
\end{cases}
\Rightarrow
\begin{cases}
	\begin{pmatrix}
        \mathbb{M}_1^\rho + \mathbb{M}_2^\rho \\
        \mathbb{M}_1^j + \mathbb{M}_2^j \\
        \mathbb{M}_1^E + \mathbb{M}_2^E
    \end{pmatrix} = \begin{pmatrix}
        \rho_1 + \rho_2 \\
        j_1 + j_2 \\
        E_1 + E_2 
    \end{pmatrix} \equiv
    \begin{pmatrix}
        \rho \\ j \\ E
    \end{pmatrix}, \\ \newline \\
    a \begin{pmatrix}
        - \mathbb{M}_1^\rho + \mathbb{M}_2^\rho \\
        - \mathbb{M}_1^j + \mathbb{M}_2^j \\
        - \mathbb{M}_1^E + \mathbb{M}_2^E
    \end{pmatrix} =
    \begin{pmatrix}
        j \\ j^2/\rho + P \\ (E + P)j/\rho
    \end{pmatrix},
\end{cases} \\
     \label{eq:conditions_Maxwellian_Euler}
\end{align}
where $P$, the thermodynamic pressure, is related to $\mathbf{u}^\varepsilon$ by an appropriate equation of state. This system of equations is always invertible, so that we can find a Maxwellian state satisfying conditions 
(\ref{eq:conditions_Maxwellian_Euler}). When $\rho(\bbf'(\bbu^\varepsilon)) <a$, where $\rho(\mathbf{M})$ denotes the spectral radius of a matrix $\mathbf{M}$, this model becomes compatible with entropy inequalities \cite{Bouchut}.
\end{example}

The questions of the present work are: can we approach a transport equation including a diffusive flux $-\mathbf{D} \partial_x \mathbf{u}$ for ``small'' values of $\varepsilon$ with a kinetic system such as (\ref{eq:kinetic_equation_retained})?
 Can we build explicit high-order numerical schemes based on the idea of~\cite{Torlo} to solve such transport-diffusion problems? It is noteworthy that we want to preserve the essential properties of the method developed in \cite{Torlo}, which are:
\begin{enumerate}
	\item[(a)] the scheme is computationally explicit involving local matrices (in space) only,
	\item[(b)] it is stable with hyperbolic stability conditions $\Delta t = \mathcal{O}(\Delta x)$ for CFL numbers close to or even above $1$,
	\item[(c)] the convergence order in time and space can be arbitrarily chosen.
\end{enumerate}
In particular, concerning (a), we aim to avoid the need to invert large matrices involving multiple spatial points for the sake of efficiency and memory purposes. This is why we refrain from using implicit time integration schemes to to handle space derivatives, as proposed in previous work~\cite{Boscarino2013, Boscarino2014}.

\subsection{First attempt based on the Chapman-Enskog expansion}
\label{sec:naive_CE}

In this section, we first perform a Chapman-Enskog expansion of (\ref{eq:kinetic_equation_retained}) 
to show that the BGK kinetic model may not be appropriate to approximate the advection-diffusion problem (\ref{eq:transport_diffusion}),
 and this will help to suggest a solution. Note that, although the mathematical rigor of the Chapman-Enskog expansion may be open to question\footnote{It should be noted that in some cases, this expansion can lead to non-physical and unstable macroscopic equations at the third-order, such as the Burnett equations, as observed in the context of the kinetic theory of gases~\cite{Bobylev2006}.}, we apply it here in the construction of kinetic systems for numerical schemes which will be validated \textit{a posteriori}.

First, note that \eqref{eq:kinetic_equation_retained} is equivalent to

\begin{equation}\label{eq:F_M_epsilon2}
	\mathbf{F} = \mathbb{M}(\mathbb{P} \mathbf{F}) - \varepsilon \omega \left[ \frac{\partial \mathbf{F}}{\partial t} + \Lambda \frac{\partial \mathbf{F}}{\partial x} \right],
\end{equation}
where we define
\begin{align}
	\omega = \frac{\ell}{||\Lambda||}.
\end{align}
Looking at \eqref{eq:F_M_epsilon2}, we see that different regimes may be considered depending on the value of $\varepsilon$. When $\varepsilon \ll 1$, the effects of collisions dominate and distribution functions $\bF$ are very close to the Maxwellian state $\M(\P \bF)$. This reads
\begin{align}
    \mathbf{F} = \mathbb{M}(\mathbb{P} \mathbf{F}) + \mathcal{O}(\varepsilon).
    \label{eq:F_M_epsilon}
\end{align}
Injecting (\ref{eq:F_M_epsilon}) in (\ref{eq:F_M_epsilon2}) yields an approximation of $\mathbf{F}$ up to the second-order in $\varepsilon$:
\begin{align}
	\mathbf{F} = \mathbb{M}(\mathbf{u}^\varepsilon) - \varepsilon \omega \left[ \frac{\partial \mathbb{M}(\mathbf{u}^\varepsilon)}{\partial t} + \Lambda \frac{\partial \mathbb{M}(\mathbf{u}^\varepsilon)}{\partial x} \right] + \mathcal{O}(\varepsilon^2),
\end{align}
where $\mathbf{u}^\varepsilon = \mathbb{P}\mathbf{F}$. Then left multiplying by the constant matrix $\mathbb{P} \Lambda$,
\begin{align}
	\mathbb{P} \Lambda \mathbf{F} = \mathbf{f}(\mathbf{u}^\varepsilon) - \varepsilon \omega \left[ \frac{\partial \mathbf{f}(\mathbf{u}^\varepsilon)}{\partial t} + \frac{\partial \mathbb{P} \Lambda^2 \mathbb{M}(\mathbf{u}^\varepsilon)}{\partial x} \right] + \mathcal{O}(\varepsilon^2), \label{eq:P_Lam_F_1}
\end{align}
where we used the fact that $\mathbb{P} \Lambda \mathbb{M} (\mathbf{u}^\varepsilon) = \mathbf{f}(\mathbf{u}^\varepsilon)$ by construction of the Maxwellian. In the kinetic theory gases, the quantity $\mathbb{P} \Lambda^2 \mathbb{M}(\mathbf{u}^\varepsilon) \equiv \mathbf{m}_2(\mathbf{u}^\varepsilon)$ is commonly referred to as the second-order moment of the Maxwellian $\mathbb{M}$. The time-derivative of $\mathbf{f}(\mathbf{u}^\varepsilon)$ can be addressed using a chain rule as
\begin{align}
	\frac{\partial \mathbf{f} (\mathbf{u}^\varepsilon)}{\partial t} = \mathbf{f}'(\mathbf{u}^\varepsilon) \frac{\partial \mathbf{u}^\varepsilon}{\partial t}.
\end{align}
Then, applying the projector $\mathbb{P}$ on (\ref{eq:kinetic_equation_retained}) yields
\begin{align}
\dpar{\bbu^\varepsilon}{t}+\dpar{\P\Lambda\bF}{x}=0, \label{eq:P_kinetic_equation}
\end{align}
so that
\begin{align}
	\dpar{ \bbu^\varepsilon}{t} = -\dpar{\P \Lambda \bF}{x} = -\dpar{\mathbf{f}(\bbu^\varepsilon)}{x} + \mathcal{O}(\varepsilon) = -\mathbf{f}'(\bbu^\varepsilon) \dpar{\bbu^\varepsilon}{x} +  \mathcal{O}(\varepsilon).
\end{align}
Hence,
\begin{align}
	\dpar{\mathbf{f}(\bbu^\varepsilon)}{t} = - \left( \mathbf{f}'(\bbu^\varepsilon) \right)^2 \dpar{\bbu^\varepsilon}{x} + \mathcal{O}(\varepsilon).
\end{align}
Furthermore, using a chain rule,
\begin{align}
	\frac{\partial \mathbb{P} \Lambda^2 \mathbb{M}(\mathbf{u}^\varepsilon)}{\partial x} = \frac{\partial \mathbf{m}_2 (\mathbf{u}^\varepsilon)}{\partial x} = \mathbf{m}_2'(\mathbf{u}^\varepsilon) \frac{\partial \mathbf{u}^\varepsilon}{\partial x},
\end{align}
so that Eq.~(\ref{eq:P_Lam_F_1}) yields
\begin{align}
	\mathbb{P} \Lambda \mathbf{F} = \mathbf{f}(\mathbf{u}^\varepsilon) - \varepsilon \omega \left[ \mathbf{m}_2'(\mathbf{u}^\varepsilon) - \left( \mathbf{f}'(\mathbf{u}^\varepsilon) \right)^2 \right] \frac{\partial \mathbf{u}^\varepsilon}{\partial x} + \mathcal{O}(\varepsilon^2).
\end{align}
Using this approximation for the transport term of (\ref{eq:P_kinetic_equation}) results in
\begin{align}
	\frac{\partial \mathbf{u}^\varepsilon}{\partial t} + \frac{\partial \mathbf{f}(\mathbf{u}^\varepsilon)}{\partial x} = \varepsilon \omega \frac{\partial}{\partial x} \left\{ \left[ \mathbf{m}_2'(\mathbf{u}^\varepsilon) - \left( \mathbf{f}'(\mathbf{u}^\varepsilon) \right)^2 \right] \frac{\partial \mathbf{u}^\varepsilon}{\partial x} \right\} + \mathcal{O}(\varepsilon^2).
\end{align}
Recall that $\varepsilon \omega = \tau$. This is an approximation of (\ref{eq:transport_diffusion}) up to the second-order in $\varepsilon$ if we can ensure that
\begin{align}
	\tau \left[ \mathbf{m}_2'(\mathbf{u}^\varepsilon) - \left( \mathbf{f}'(\mathbf{u}^\varepsilon) \right)^2 \right] \stackrel{!}{=} \mathbf{D}(\mathbf{u}^\varepsilon).
 \label{eq:condition_with_BGK}
\end{align}
Various interpretations can be given to this equation:
\begin{enumerate}
    \item For given $\tau$ and Maxwellian $\M$ (thus having $\mathbf{m}_2'(\mathbf{u}^\varepsilon)$ determined), Eq.~(\ref{eq:condition_with_BGK}) exhibits the diffusive behavior $\mathbf{D}$ of the asymptotic system on $\mathbf{u}^\varepsilon$ at first-order in $\varepsilon$. Notably, when a two-wave model with velocities $(-a, a)$ is considered, note that $\Lambda^2=a^2 \mathbf{I}_{kp}$, where $\mathbf{I}_{kp}$ is the $(kp \times kp)$ identity matrix. Therefore, $\bbm_2'(\bbu^\varepsilon) = a^2\mathbf{I}_p$ and a key implication of the subcharacteristic condition is recovered in (\ref{eq:condition_with_BGK}): $\mathbf{D}$ has positive eigenvalues if and only if $ \rho(\mathbf{f}'(\mathbf{u})) < a$.
    \item With a given $\tau$ and a prescribed diffusion matrix $\mathbf{D}$, Eq.~(\ref{eq:condition_with_BGK}) can be seen as a requirement on $\M$ (\textit{via} $\mathbf{m}_2'$) to approximate (\ref{eq:transport_diffusion}). However, this strategy is impractical for constructing a numerical scheme for (\ref{eq:transport_diffusion}) for two reasons: (i) when a system of equations is considered ($p \geq 2$), the condition on $\mathbf{m}_2'$ cannot, in general, be integrated to find a Maxwellian $\M$ satisfying it ; (ii) even when this condition can be integrated, the resulting Maxwellian $\M$ may not adhere to the convexity properties required in~\cite{Bouchut} to fulfill the entropy inequalities essential for ensuring the stability of the model.
    \item For a given Maxwellian $\M$ and a prescribed diffusion matrix $\mathbf{D}$, Eq.~(\ref{eq:condition_with_BGK}) provides a condition on $\tau$ to approach the target equation (\ref{eq:transport_diffusion}) when a scalar system is considered ($p=1$). The approximation is then reasonable as far as $\varepsilon \ll 1$. As shown in the numerical validation of Sec.~\ref{sec:application_scalars}, this strategy can be adopted for scalar cases, and the Knudsen number can be arbitrarily reduced by modifying the kinetic velocities in $\Lambda$. However, this strategy is not directly applicable when dealing with a system of equations ($p \geq 2$).
\end{enumerate}

\begin{example}[scalar conservation equation with a two-wave model ($p=1$, $k=2$)]
\label{ex:tau_scalar}
Applying this last strategy to the model given in Example~\ref{ex:scalar}, Eq.~(\ref{eq:condition_with_BGK}) yields
\begin{align}
    \tau = \frac{D(u^\varepsilon)}{a^2 - (f'(u^\varepsilon))^2},
\end{align}
where $D(u^\varepsilon)$ is a positive diffusion 
coefficient. The strict sub-characteristic condition ensures that $\tau >0$.
\end{example}

The main objective of the next section is to introduce new relaxation models based on a collision matrix, with the intention of extending the observation made in the third point above to systems of equations. 

\section{Collision matrix approach}
\label{sec:multi-relaxation-approach}

The choice of a Maxwellian has a significant impact on the numerical stability of a kinetic scheme. As shown in~\cite{Bouchut}, when it adheres to a monotonicity condition, the BGK model is compatible with entropy inequalities. Consequently, our motivation is to keep the same Maxwellian as in previous work~\cite{Natalini, Torlo} to preserve these paramount properties. The introduction of new free parameters necessary to approximate the diffusion term $\mathbf{D}$ is accomplished by introducing a collision matrix in place of the BGK model. 

Using the same characteristic length and velocity as in \eqref{eq:kinetic_equation_retained}, the adopted collision matrix model reads
\begin{align}
	\frac{\partial \bF}{\partial t} + \Lambda \frac{\partial \bF}{\partial x} = \dfrac{\Vert \Lambda\Vert}{\ell}\frac{\Omega}{\varepsilon} (\mathbb{M}(\mathbb{P} \mathbf{F}) - \mathbf{F}). \label{eq:kinetic_MRT}
\end{align}
This kinetic model is similar to \eqref{eq:kinetic_equation_retained} except that a square matrix $\Omega \in \mathcal{M}_{kp}(\mathbb{R})$, to be defined, has been introduced. The Knudsen number $\varepsilon$ is also a quantity to be defined. As in the BGK model of Sec.~\ref{sec:problem_statement}, the characteristic length and kinetic velocity appear because $\Lambda$ is a free parameter, and we need to quantify the ratio between the ``collision'' terms and the ``advection'' ones to perform a Chapman-Enskog expansion. We will look for $\Omega$ such that $\Vert \Omega\Vert=O(1)$.

Our question is now the following: for a given Maxwellian $\mathbb{M}$, can we define a collision matrix $\Omega$ so that (\ref{eq:kinetic_MRT}) approaches (\ref{eq:transport_diffusion}) for arbitrarily small values of $\varepsilon$?

\subsection{Conservation condition on the collision matrix}

A first condition on $\Omega$ is to satisfy 
\begin{align}
    \mathbb{P} \Omega (\mathbb{M}(\mathbb{P} \mathbf{F}) - \mathbf{F}) = \mathbf{0},
\end{align}
ensuring the conservation of the quantity $\mathbf{u}^\varepsilon = \P \bF$. Let us build a general matrix $\Omega$ satisfying this condition. To fix the ideas and without loss of generality, we will adopt the conventions adopted in Example~\ref{ex:Euler} to define the components of $\mathbf{F}$. This means, the first $p$ lines of $\mathbf{F}$ are associated to the first wave of the model, and so on (in general: lines between $(i-1)p+1$ and $ip$ are associated to the wave $i$). We also assume that $\mathbb{P}$ has the block-matrix shape
\begin{align}
    \mathbb{P} = 
    \begin{pmatrix}
        \mathbf{I}_p & \hdots & \mathbf{I}_p
    \end{pmatrix}.
\end{align}
Then we choose $\Omega$ as an identity block matrix,
\begin{align}
    \Omega = 
    \begin{pmatrix}
        \tilde{\Omega} & \hdots & 0 \\
        \vdots & \ddots & \vdots \\
        0 & \hdots & \tilde{\Omega}
    \end{pmatrix} = \mathbf{I}_k \otimes \tilde{\Omega} \in \mathcal{M}_{kp} (\R), \label{eq:Collision_matrix_block}
\end{align}
where $\tilde{\Omega} \in \mathcal{M}_{p} (\R)$ is a matrix to be defined, $\mathbf{I}_k$ is the $(k \times k)$ identity matrix and symbol $\otimes$ stands for the Kronecker product of two matrices. The latter is defined for two matrices $\mathbf{A}=(a_{ij})\in \mathcal{M}_k(\R)$ and $\mathbf{B}\in \mathcal{M}_r(\R)$ as
\begin{align}
    \mathbf{A}\otimes \mathbf{B}=\begin{pmatrix}
a_{11}\mathbf{B} & \ldots & a_{1n}\mathbf{B}\\
\vdots  & \ddots &\vdots\\
a_{k1}\mathbf{B}  &\ldots &a_{kk}\mathbf{B}\end{pmatrix} \in \mathcal{M}_{kr}(\R).
\end{align}
The adopted form of $\Omega$ means that we assume a similar relaxation parameter for all the distributions carrying a given variable of $\mathbf{u}^\varepsilon$.

\begin{example}[Conservation equations for fluid dynamics]
    Adopting the notations of Example \ref{ex:Euler}, the multi-relaxation model yields the following PDE:
    \begin{align}
        & \frac{\partial}{\partial t}
        \begin{pmatrix}
            \rho_1 \\ j_1 \\ E_1
        \end{pmatrix}
        - \frac{\partial}{\partial x} \begin{pmatrix}
            \rho_1 \\ j_1 \\ E_1
        \end{pmatrix}
         = \frac{\tilde{\Omega}}{\omega \varepsilon}
         \begin{pmatrix}
             \M_1^\rho - \rho_1 \\
             \M_1^j - j_1 \\
             \M_1^E - E_1
         \end{pmatrix}, \\
         & \frac{\partial}{\partial t}
        \begin{pmatrix}
            \rho_2 \\ j_2 \\ E_2
        \end{pmatrix}
        + \frac{\partial}{\partial x} \begin{pmatrix}
            \rho_2 \\ j_2 \\ E_2
        \end{pmatrix}
         = \frac{\tilde{\Omega}}{\omega \varepsilon}
         \begin{pmatrix}
             \M_2^\rho - \rho_2 \\
             \M_2^j - j_2 \\
             \M_2^E - E_2
         \end{pmatrix}.
    \end{align}
\end{example}

With the choice of Eq.~(\ref{eq:Collision_matrix_block}), it is clear that $\mathbb{P} \Omega = \tilde{\Omega} \mathbb{P}$, so that
\begin{align}
    \mathbb{P} \Omega (\mathbb{M}(\mathbb{P} \mathbf{F}) - \mathbf{F}) = \tilde{\Omega} \mathbb{P} (\mathbb{M}(\mathbb{P} \mathbf{F}) - \mathbf{F}) = \mathbf{0},
\end{align}
and
\begin{align}
    \frac{\partial \mathbf{u}^\varepsilon}{\partial t} + \frac{\partial (\mathbb{P} \Lambda \mathbf{F})}{\partial x} = 0, 
\end{align}
meaning that the components of $\mathbf{u}^\varepsilon$ are conserved by construction. 

\subsection{Chapman-Enskog expansion}\label{sec:ChapmanE}

Let us now perform a similar expansion as in Sec.~\ref{sec:naive_CE} to obtain an approximation of the flux term $\mathbb{P} \Lambda \mathbf{F}$.   Eq.~(\ref{eq:kinetic_MRT}) yields
\begin{align}
    \mathbf{F} &= \mathbb{M}(\mathbf{u}^\varepsilon) - \varepsilon \omega\Omega^{-1} \left[ \frac{\partial \mathbf{F}}{\partial t} + \Lambda \frac{\partial \mathbf{F}}{\partial x} \right] \nonumber \\
    & = \mathbb{M}(\mathbf{u}^\varepsilon) - \varepsilon \omega \Omega^{-1} \left[ \frac{\partial \mathbb{M}(\mathbf{u}^\varepsilon)}{\partial t} + \Lambda \frac{\partial \mathbb{M}(\mathbf{u}^\varepsilon)}{\partial x} \right] + \mathcal{O}(\varepsilon^2).
\end{align}
Then left-multiplying by $\mathbb{P} \Lambda$:
\begin{align}
    \mathbb{P} \Lambda \mathbf{F} = \mathbf{f}(\mathbf{u}^\varepsilon) - \varepsilon \omega \mathbb{P} \Lambda \Omega^{-1}  \left[ \frac{\partial \mathbb{M}(\mathbf{u}^\varepsilon)}{\partial t} + \Lambda \frac{\partial \mathbb{M}(\mathbf{u}^\varepsilon)}{\partial x} \right] + \mathcal{O}(\varepsilon^2).
\end{align}
Given the block-matrix shapes of $\mathbb{P}$, $\Lambda$ and $\Omega$, we have $\mathbb{P} \Lambda \Omega^{-1} = \tilde{\Omega}^{-1}\mathbb{P} \Lambda$, so that
\begin{align}
    \mathbb{P}\Lambda \mathbf{F} = \mathbf{f}(\mathbf{u}^\varepsilon) - \varepsilon \omega \tilde{\Omega}^{-1} \left[ \frac{\partial \mathbf{f} (\mathbf{u}^\varepsilon)}{\partial t} + \frac{\partial \mathbf{m}_2 (\mathbf{u}^\varepsilon)}{\partial x} \right] + \mathcal{O}(\varepsilon^2).
\end{align}
Using similar chain rules as in Sec.~\ref{sec:naive_CE} gives
\begin{align}
    \mathbb{P} \Lambda \mathbf{F} = \mathbf{f}(\mathbf{u}^\varepsilon) - \varepsilon \omega \tilde{\Omega}^{-1} \left[ \mathbf{m}_2'(\mathbf{u}^\varepsilon) - \left( \mathbf{f}'(\mathbf{u}^\varepsilon) \right)^2 \right] \frac{\partial \mathbf{u}^\varepsilon}{\partial x} + \mathcal{O}(\varepsilon^2),
\end{align}
so that the following conservation equation can be obtained:
\begin{align}
	\frac{\partial \mathbf{u}^\varepsilon}{\partial t} + \frac{\partial \mathbf{f}(\mathbf{u}^\varepsilon)}{\partial x} = \varepsilon \frac{\partial}{\partial x} \left\{ \omega \tilde{\Omega}^{-1} \left[ \mathbf{m}_2'(\mathbf{u}^\varepsilon) - \left( \mathbf{f}'(\mathbf{u}^\varepsilon) \right)^2 \right] \frac{\partial \mathbf{u}^\varepsilon}{\partial x} \right\} + \mathcal{O}(\varepsilon^2). \label{eq:CE_transport_diffusion}
\end{align}
This is an approximation of Eq.~(\ref{eq:transport_diffusion}) up to the first-order in $\varepsilon$ if we can ensure that
\begin{align}
   \varepsilon \omega \tilde{\Omega}^{-1} \left[ \mathbf{m}_2'(\mathbf{u}^\varepsilon) - \left( \mathbf{f}'(\mathbf{u}^\varepsilon) \right)^2 \right] = \mathbf{D}(\mathbf{u}^\varepsilon). \label{eq:condition_with_MRT}
\end{align}
Assuming that $\left[ \mathbf{m}_2'(\mathbf{u}^\varepsilon) - \left( \mathbf{f}'(\mathbf{u}^\varepsilon) \right)^2 \right]$ is invertible, this yields a relationship satisfied by $ \varepsilon \omega \tilde{\Omega}^{-1}$:
\begin{align}\label{Condition:T}
    \varepsilon \omega \tilde{\Omega}^{-1} = \mathbf{D} (\mathbf{u}^\varepsilon) \left[ \mathbf{m}_2'(\mathbf{u}^\varepsilon) - \left( \mathbf{f}'(\mathbf{u}^\varepsilon) \right)^2 \right]^{-1}.
\end{align}
Since $\mathbf{D}$ is in general not invertible\footnote{This is for example the case of the Navier-Stokes equation, where the first line of $\mathbf{D}$, related to mass conservation, is identically null.}, this relationship cannot be inverted to compute $\tilde{\Omega}$. However, as will be shown thereafter, this problem can be solved thanks to the use of specific temporal schemes. In particular, the formal limit $\mathbf{D} = \mathbf{0}$ can be considered by the present framework, allowing us to recover the particular non-viscous case of \cite{Torlo}.

From \eqref{eq:CE_transport_diffusion}, we see that the diffusive system \eqref{eq:transport_diffusion} can be approximated by the kinetic model under two assumptions:
\begin{enumerate}
	\item[(i)] the matrix $\mathbf{m}_2'(\mathbf{u}^\varepsilon) - \left( \mathbf{f}'(\mathbf{u}^\varepsilon) \right)^2$ is invertible (necessary to compute $ \varepsilon \omega \tilde{\Omega}^{-1} $ through \eqref{Condition:T}),
	\item[(ii)] the consistency error in \eqref{eq:CE_transport_diffusion} can be neglected, i.e. $\varepsilon \ll 1$.
\end{enumerate}
The first assumption will be justified in Sec.~\ref{sec:assumption_justification} for any wave model that satisfies the sub-characteristic condition. Regarding the second assumption, ensuring its validity is the key to the method we propose. It is therefore paramount to have a correct estimation of $\varepsilon$. Recalling that $\Vert \Omega \Vert = \mathcal{O}(1)$ and $\omega = \ell / \Vert \Lambda \Vert$, we have 
\begin{align}
	\varepsilon = \frac{\Vert \Lambda\Vert}{\ell}\left \Vert \mathbf{D} 
\left[ \mathbf{m}_2'(\mathbf{u}^\varepsilon) - \left( \mathbf{f}'(\mathbf{u}^\varepsilon) \right)^2 \right]^{-1} \right \Vert.
\end{align}
Noticing that $\mathbf{m}_2'(\bbu^\varepsilon)$ is proportional to $\Vert \Lambda \Vert^2$, we can observe that
\begin{align}
	\varepsilon \approx \frac{\Vert \mathbf{D} \Vert}{\Vert \Lambda \Vert \ell},
	\label{eq:def_epsilon}
\end{align}
which is the general definition of Knudsen number we will adopt for all the examples of sections \ref{sec:application_scalars} and \ref{sec:application_NS}.
Note that, looking at how we have obtained \eqref{Condition:T}, there is in fact no need that $\varepsilon$ be a scalar, it can be a diagonal matrix, i.e. we can have a Knudsen number for each line of $\mathbf{D}$. The dependence of $\varepsilon$ on $\Vert\Lambda\Vert$ provides an interesting feature to the consistency error: it can be arbitrarily adjusted by modifying the kinetic velocities, which are a free parameter as far as the monotonicity condition of the Maxwellian is satisfied (in general, the sub-characteristic condition). This property will be exhibited in the numerical validations of Secs.~\ref{sec:application_scalars}-\ref{sec:application_NS}. The dependence of $\varepsilon$ on $\ell$ recalls us that the validity of the hypothesis always depends on the characteristic scale of the problem under consideration. It is very similar to the validity of the Navier-Stokes equations, which can be reasonably adopted as far as the characteristic length of a problem is larger than the mean free path of particles (continuum assumption).

Before discussing this on a case by case basis, let us check that for any wave model that satisfies the sub-characteristic condition, our assumption (i) is justified.
\subsection{Justification for the construction of the collision matrix}
\label{sec:assumption_justification}
The calculations have been performed under the assumption (i) that the matrix $\left[ \mathbf{m}_2'(\mathbf{u}^\varepsilon) - \left( \mathbf{f}'(\mathbf{u}^\varepsilon) \right)^2 \right]$ is invertible. In the present section, we provide a rationale for it. In the particular case of the two-wave model, assumption it is satisfied as a consequence of the sub-characteristic condition, as shown in the example below.

\begin{example}[Two-wave model]
    For a two-wave model with velocities $(-a,a)$, we have $\Lambda^2 = a^2 \mathbf{I}_{kp}$, so $\bbm_2'(\bbu^\varepsilon) =  a^2 \mathbf{I}_p$. Then, noting $\mathbf{f}'(\bbu^\varepsilon) = \mathbf{Q} \mathbf{R} \mathbf{Q}^{-1}$ where $\mathbf{R}=\mathrm{diag}(\lambda_1,..,\lambda_p)$ is a diagonal matrix, we have
    \begin{align}
        \mathbf{m}_2'(\mathbf{u}^\varepsilon) - \left( \mathbf{f}'(\mathbf{u}^\varepsilon) \right)^2 = a^2 \mathbf{I}_p + \mathbf{Q} \mathbf{R}^2 \mathbf{Q}^{-1} = \mathbf{Q}\, \mathrm{diag} (a^2-\lambda_1^2, \dots, a^2 - \lambda_p^2) \,\mathbf{Q}^{-1}.
    \end{align}
    When the subcharacteristic condition $\mathrm{max}_i |\lambda_i | < a$ is satisfied, the matrix $\mathbf{m}_2'(\mathbf{u}^\varepsilon) - \left( \mathbf{f}'(\mathbf{u}^\varepsilon) \right)^2$ is diagonalizable with strictly positive eigenvalues and invertible. 
\end{example}

We now prove  that this property can be generalized to any wave system, assuming that $\mathbf{D}$ satisfies the properties given in Sec.~\ref{sec:problem_statement} and that the sub-characteristic condition is satisfied. We first have the following proposition.

\begin{proposition}\label{prop:1}
    Suppose that there exists a strictly convex entropy $\eta (\bbu)$ with Hessian matrix $\mathbf{A}_0$ such that:
    \begin{enumerate}
        \item  $\mathbf{A}_0\bbf'(\bbu) $ is symmetric, 
        \item The Maxwellians are monotone: $\mathbf{A}_0\mathbb{M}_i'(\bbu) $ is symmetric positive definite for all $i \in [1, k]$ as in \cite{Bouchut}.
        \item The entries $a_i$ of the diagonal matrix $\Lambda$ satisfy $\min_i\vert a_i\vert > \rho\big (\bbf'(\bbu)\big )$.
    \end{enumerate}
    Then the matrix $\mathbf{K} =  \mathbf{A}_0\left[ \mathbf{m}_2'(\bbu) - (\bbf'(\bbu))^2 \right]$ is symmetric positive semi-definite.
\end{proposition}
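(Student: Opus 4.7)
My plan is to recognize $\mathbf{K}$ as a Schur complement of a block matrix that is manifestly positive semi-definite, where the positivity is inherited directly from the monotonicity hypothesis on the Maxwellians. To this end I would first introduce the rescaled Jacobians $\mathbf{B}_i := \mathbf{A}_0 \mathbb{M}_i'(\bbu)$, each symmetric positive definite by hypothesis~(2). Differentiating the identities $\mathbb{P}\mathbb{M}(\bbu) = \bbu$ and $\mathbb{P}\Lambda\mathbb{M}(\bbu) = \bbf(\bbu)$, together with the block structure of $\mathbb{P}$ and $\Lambda$, yields
\[
\sum_i \mathbf{B}_i = \mathbf{A}_0, \qquad \sum_i a_i \mathbf{B}_i = \mathbf{A}_0 \bbf'(\bbu), \qquad \sum_i a_i^2 \mathbf{B}_i = \mathbf{A}_0 \mathbf{m}_2'(\bbu).
\]
As a by-product, $\mathbf{A}_0 \bbf'(\bbu)$ is a real linear combination of the symmetric matrices $\mathbf{B}_i$, hence symmetric, so hypothesis~(1) is in fact redundant. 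Setting $\mathbf{S} := \mathbf{A}_0 \bbf'(\bbu)$ and inserting $\mathbf{A}_0 \mathbf{A}_0^{-1}$, I rewrite $\mathbf{A}_0 (\bbf'(\bbu))^2 = \mathbf{S}\mathbf{A}_0^{-1}\mathbf{S}$, so that
\[
\mathbf{K} = \sum_i a_i^2 \mathbf{B}_i - \mathbf{S} \mathbf{A}_0^{-1} \mathbf{S}.
\]

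The central step is then to exhibit $\mathbf{K}$ as the Schur complement of $\mathbf{A}_0$ in the symmetric $(2p \times 2p)$ block matrix
\[
\mathbf{N} := \begin{pmatrix} \sum_i a_i^2 \mathbf{B}_i & \mathbf{S} \\ \mathbf{S} & \mathbf{A}_0 \end{pmatrix} = \sum_i \begin{pmatrix} a_i \mathbf{I}_p \\ \mathbf{I}_p \end{pmatrix} \mathbf{B}_i \begin{pmatrix} a_i \mathbf{I}_p & \mathbf{I}_p \end{pmatrix}.
\]
The second equality is the key algebraic observation: each kinetic wave contributes a summand of the form $\mathbf{R}_i \mathbf{B}_i \mathbf{R}_i^T$ with $\mathbf{B}_i$ symmetric positive definite, hence this summand is positive semi-definite. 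Therefore $\mathbf{N}$ is positive semi-definite as a sum of such blocks, and since $\mathbf{A}_0$ is symmetric positive definite (thus invertible), the classical Schur complement lemma concludes that $\mathbf{K}$ is symmetric positive semi-definite.

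I do not anticipate any serious obstacle; the argument is essentially algebraic once the right block matrix is written down. Interestingly, hypothesis~(3), the sub-characteristic condition, plays no role in this argument. It will however become crucial afterwards, when one needs \emph{strict} positive definiteness of $\mathbf{m}_2'(\bbu) - (\bbf'(\bbu))^2$ in order to invert it in~\eqref{Condition:T} and thereby construct the collision matrix $\tilde{\Omega}$.
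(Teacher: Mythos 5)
Your proof is correct, and it takes a genuinely different route from the paper's. The paper argues directly on the quadratic form: it bounds $\langle \mathbf{K}\mathbf{x},\mathbf{x}\rangle \geq \min_i a_i^2\,\langle \mathbf{A}_0\mathbf{x},\mathbf{x}\rangle - \langle \mathbf{A}_0\bbf'(\bbu)^2\mathbf{x},\mathbf{x}\rangle$ using $\sum_i \M_i'(\bbu)=\mathbf{I}_p$, invokes the sub-characteristic condition to replace $\min_i a_i^2$ by $\rho(\bbf'(\bbu))^2$, and then conjugates by $\mathbf{A}_0^{\pm 1/2}$ to see that the remainder $\rho(\bbf'(\bbu))^2\mathbf{I}_p - \big(\mathbf{A}_0^{1/2}\bbf'(\bbu)\mathbf{A}_0^{-1/2}\big)^2$ has non-negative eigenvalues. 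Your Gram-type decomposition $\mathbf{N}=\sum_i \mathbf{R}_i\mathbf{B}_i\mathbf{R}_i^T$ followed by the Schur complement lemma is cleaner and buys two things the paper's argument does not make explicit: it shows that hypothesis (1) is a consequence of hypothesis (2) together with the differentiated Maxwellian constraints $\sum_i\mathbf{B}_i=\mathbf{A}_0$ and $\sum_i a_i\mathbf{B}_i=\mathbf{A}_0\bbf'(\bbu)$, and it shows that the semi-definiteness of $\mathbf{K}$ holds without the sub-characteristic condition (3) at all. The trade-off is that the paper's estimate actually delivers the \emph{strict} inequality $\langle\mathbf{K}\mathbf{x},\mathbf{x}\rangle>0$ for $\mathbf{x}\neq 0$, i.e.\ positive definiteness, which is what the two corollaries genuinely need in order to invert $\mathbf{m}_2'(\bbu)-(\bbf'(\bbu))^2$ in \eqref{Condition:T}; your argument, as you note yourself, stops at semi-definiteness. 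If you want your route to feed the corollaries, you should graft the sub-characteristic condition back in, e.g.\ by observing $\sum_i a_i^2\mathbf{B}_i \succeq \min_i a_i^2\,\mathbf{A}_0$ and then comparing $\min_i a_i^2\,\mathbf{A}_0$ with $\mathbf{S}\mathbf{A}_0^{-1}\mathbf{S}$ exactly as the paper does --- at which point the two proofs essentially merge.
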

\begin{remark}
In practice, condition 2, i.e. the monotonicity of the Maxwellians, implies condition 3.
\end{remark}

\begin{proof}
We first show that $\mathbf{K}$ is symmetric. We have:
\begin{align}
    \mathbf{K} = \mathbf{A}_0\P \Lambda^2 \M'(\bbu)  - \mathbf{A}_0\bbf'(\bbu)^2 .
    \label{eq:demo1_eq1}
\end{align}
With $\Lambda = \mathrm{diag}(a_1 \mathbf{I}_p, \dots a_k \mathbf{I}_p)$, we have $\mathbf{A}_0\P \Lambda^2 \M'(\bbu)  = \sum_{i=1}^k a_i^2 \mathbf{A}_0 \M_i'(\bbu) $. Hence, since  $\mathbf{A}_0\M_i'(\bbu) $ is symmetric, the first term of (\ref{eq:demo1_eq1}) is symmetric. Regarding the second term, using the fact that $\mathbf{A}_0$ and $\mathbf{A}_0\bbf'(\bbu) $ are symmetric, we have:
\begin{equation*}
\begin{split}
    \mathbf{A}_0\bbf'(\bbu)^2  &=  \big ( \mathbf{A}_0\bbf'(\bbu) \big ) \bbf'(\bbu)
    = \big ( \mathbf{A}_0\bbf'(\bbu) \big )^T\bbf'(\bbu)
    = \bbf'(\bbu)^T \mathbf{A}_0 \bbf'(\bbu)\\
    & = \bbf'(\bbu)^T\big ( \mathbf{A}_0 \bbf'(\bbu)\big )^T
   =\big ( \bbf'(\bbu)^2\big )^T \mathbf{A}_0\\
    &=\big ( \mathbf{A}_0\bbf'(\bbu)^2 \big )^T
\end{split}
\end{equation*}
so that the second term of (\ref{eq:demo1_eq1}) is symmetric. Hence $\mathbf{K}$ is symmetric.
Next, we denote by $\langle \mathbf{x},\mathbf{y}\rangle$ the Euclidian scalar product between the vectors $\mathbf{x}$ and $\mathbf{y}$.
Using $\bbu=\sum\limits_i \M_i(\bbu)$ so that $\mathbf{I}_p=\sum\limits_i \M'_i(\bbu)$, we have for any $\mathbf{x}$
\begin{equation*}
\begin{split}
\langle \mathbf{K}\mathbf{x},\mathbf{x}\rangle&=\sum_{i=1}^k a_i^2 \langle \mathbf{A}_0 \M_i'(\bbu)
\mathbf{x},\mathbf{x}\rangle-\langle \mathbf{A}_0\bbf'(\bbu)^2\mathbf{x},\mathbf{x}\rangle\\
&\geq \min\limits_i\vert a_i\vert^2\langle \sum_i \mathbf{A}_0\M_i'(\bbu)\mathbf{x},\mathbf{x}\rangle -
\langle \mathbf{A}_0\bbf'(\bbu)^2\mathbf{x},\mathbf{x}\rangle\\
& > \langle \mathbf{A}_0\bigg (  \rho(\bbf'(\bbu))^2 \mathbf{I}_p-\bbf'(\bbu)^2\bigg )\mathbf{x},\mathbf{x}\rangle.
\end{split}
\end{equation*}
This shows that, with $\mathbf{x}=\mathbf{A}_0^{-1/2} \mathbf{y}$,
  \begin{equation*}
\begin{split}
\langle \mathbf{K}\mathbf{A}_0^{-1/2}\mathbf{y},\mathbf{A}_0^{-1/2}\mathbf{y}\rangle& > \langle \mathbf{A}_0\bigg (  \rho(\bbf'(\bbu))^2 \mathbf{I}_p-\bbf'(\bbu)^2\bigg )\mathbf{A}_0^{-1/2}\mathbf{y},\mathbf{A}_0^{-1/2}\mathbf{y}\rangle\\
&=\langle \mathbf{A}_0^{-1/2}\bigg [\mathbf{A}_0\bigg (  \rho(\bbf'(\bbu))^2 \mathbf{I}_p-\bbf'(\bbu)^2\bigg )\mathbf{A}_0^{-1/2}\bigg ]\mathbf{y},\mathbf{y}\rangle.
\end{split}
\end{equation*}

We notice that the symmetric matrix
$$\mathbf{A}_0^{-1/2}\bigg [\mathbf{A}_0\bigg (  \rho(\bbf'(\bbu))^2 \mathbf{I}_p-\bbf'(\bbu)^2\bigg )\bigg ]\mathbf{A}_0^{-1/2}=
  \rho(\bbf'(\bbu))^2 \mathbf{I}_p- \big ( \mathbf{A}_0^{1/2}\bbf'(\bbu) \mathbf{A}_0^{-1/2}\big )^2$$
  has positive eigenvalues because the eigenvalues of $\mathbf{A}_0^{1/2}\bbf'(\bbu) \mathbf{A}_0^{-1/2}$ are those of $\bbf'(\bbu)$. Hence,
$$ \langle \mathbf{K}\mathbf{A}_0^{-1/2}\mathbf{y},\mathbf{A}_0^{-1/2}\mathbf{y}\rangle >0.$$
We take $\mathbf{y}=\mathbf{A}_0^{1/2}\mathbf{x}$ and we obtain the result.
\end{proof}
\begin{corollary}
    If all the conditions of Proposition \ref{prop:1} are satisfied, then the matrix $\mathbf{m}_2'(\bbu) - (\bbf'(\bbu))^2$ has real strictly positive eigenvalues and is invertible.
\end{corollary}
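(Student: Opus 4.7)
The plan is to leverage Proposition \ref{prop:1} together with a standard similarity argument: the matrix $\mathbf{m}_2'(\bbu) - (\bbf'(\bbu))^2$ differs from the symmetric positive semi-definite matrix $\mathbf{K}$ only by a left multiplication by $\mathbf{A}_0^{-1}$, and conjugating by $\mathbf{A}_0^{1/2}$ will turn this product into a symmetric matrix whose spectrum is the one we want.

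First, I would invoke Proposition \ref{prop:1} to obtain that $\mathbf{K} = \mathbf{A}_0\left[ \mathbf{m}_2'(\bbu) - (\bbf'(\bbu))^2 \right]$ is symmetric positive semi-definite. Inspecting the final estimate in the proof of Proposition \ref{prop:1}, the inequality is in fact strict, so $\mathbf{K}$ is symmetric positive \emph{definite} (under the strict sub-characteristic condition stated in hypothesis 3). Since $\eta$ is strictly convex by assumption, its Hessian $\mathbf{A}_0$ is symmetric positive definite and therefore admits a symmetric positive definite square root $\mathbf{A}_0^{1/2}$ with inverse $\mathbf{A}_0^{-1/2}$.

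Next, I would write
\begin{equation*}
\mathbf{m}_2'(\bbu) - (\bbf'(\bbu))^2 = \mathbf{A}_0^{-1}\mathbf{K} = \mathbf{A}_0^{-1/2}\left(\mathbf{A}_0^{-1/2}\,\mathbf{K}\,\mathbf{A}_0^{-1/2}\right)\mathbf{A}_0^{1/2},
\end{equation*}
which exhibits $\mathbf{m}_2'(\bbu) - (\bbf'(\bbu))^2$ as similar to $\tilde{\mathbf{K}} := \mathbf{A}_0^{-1/2}\mathbf{K}\mathbf{A}_0^{-1/2}$. The matrix $\tilde{\mathbf{K}}$ is clearly symmetric (since $\mathbf{K}$ is) and, for any nonzero vector $\mathbf{y}$, $\langle \tilde{\mathbf{K}}\mathbf{y}, \mathbf{y}\rangle = \langle \mathbf{K}\mathbf{A}_0^{-1/2}\mathbf{y}, \mathbf{A}_0^{-1/2}\mathbf{y}\rangle > 0$, so $\tilde{\mathbf{K}}$ is symmetric positive definite. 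Hence its eigenvalues are real and strictly positive, and since similarity preserves eigenvalues, the same holds for $\mathbf{m}_2'(\bbu) - (\bbf'(\bbu))^2$. Having no zero eigenvalue, this matrix is invertible, concluding the proof.

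There is no real obstacle here; the argument is essentially a one-line corollary of Proposition \ref{prop:1} plus the elementary fact that $\mathbf{A}_0^{-1}\mathbf{K}$, with $\mathbf{A}_0$ and $\mathbf{K}$ both symmetric positive definite, is similar to a symmetric positive definite matrix. The only subtlety worth being explicit about is that the proof of Proposition \ref{prop:1} actually yields strict positivity of $\mathbf{K}$ (a consequence of the strict sub-characteristic condition $\min_i |a_i| > \rho(\bbf'(\bbu))$), so one obtains strictly positive eigenvalues rather than merely non-negative ones.
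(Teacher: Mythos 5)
Your argument is correct and is essentially identical to the paper's own proof: both establish that $\mathbf{m}_2'(\bbu) - (\bbf'(\bbu))^2$ is similar to the symmetric matrix $\mathbf{A}_0^{-1/2}\mathbf{K}\mathbf{A}_0^{-1/2}$ via conjugation by $\mathbf{A}_0^{1/2}$ and conclude from the positivity of $\mathbf{K}$ obtained in Proposition \ref{prop:1}. Your explicit remark that the final inequality in the proof of Proposition \ref{prop:1} is strict (so the eigenvalues are strictly positive, not merely non-negative) is a welcome clarification of a point the paper leaves implicit.
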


\begin{proof} We have 
$$\mathbf{A}_0^{-1/2}\mathbf{K}\mathbf{A}_0^{-1/2}=\mathbf{A}_0^{1/2}\bigg ( \mathbf{m}_2'-\big ( \bbf'(\bbu)\big )^2 \bigg ) \mathbf{A}_0^{-1/2}
$$
so that  $\mathbf{m}_2'-\big ( \bbf'(\bbu)\big )^2$ is similar to $\mathbf{A}_0^{-1/2}\mathbf{K}\mathbf{A}_0^{-1/2}$ and that has positive eigenvalues.
\end{proof}
\begin{corollary}
    Le us assume that $\mathbf{A}_0$ is such that   $\mathbf{D} \mathbf{A}_0^{-1}$ is symmetric and has positive eigenvalues.  Then the matrix $\varepsilon \tilde{\Omega}^{-1}$ given by (\ref{Condition:T}) has real non-negative eigenvalues.
\end{corollary}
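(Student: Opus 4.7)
The plan is to reduce the problem to a standard fact about products of symmetric matrices. Set $\mathbf{M} := \mathbf{m}_2'(\bbu) - \big(\bbf'(\bbu)\big)^2$ so that \eqref{Condition:T} reads $\varepsilon\omega\tilde{\Omega}^{-1} = \mathbf{D}\mathbf{M}^{-1}$. Since $\omega > 0$ is a positive scalar, the eigenvalues of $\varepsilon\tilde{\Omega}^{-1}$ are simply those of $\mathbf{D}\mathbf{M}^{-1}$ scaled by $1/\omega$, so it suffices to show that $\mathbf{D}\mathbf{M}^{-1}$ has real non-negative eigenvalues. Note that $\mathbf{D}\mathbf{M}^{-1}$ is not symmetric in general, so the argument must proceed through a similarity transformation.

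The first step is to exploit Proposition~\ref{prop:1}, which tells us that $\mathbf{A}_0\mathbf{M}$ is symmetric positive definite. Taking the inverse, $\mathbf{M}^{-1}\mathbf{A}_0^{-1} = (\mathbf{A}_0 \mathbf{M})^{-1}$ is therefore also symmetric positive definite. Combined with the hypothesis that $\mathbf{D}\mathbf{A}_0^{-1}$ is symmetric with non-negative eigenvalues (i.e. symmetric positive semi-definite), we now have two symmetric matrices whose product can be related to $\mathbf{D}\mathbf{M}^{-1}$ via a similarity.

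The second step is to conjugate $\mathbf{D}\mathbf{M}^{-1}$ by $\mathbf{A}_0^{1/2}$ and factor:
\begin{align*}
\mathbf{A}_0^{1/2}\,\mathbf{D}\mathbf{M}^{-1}\,\mathbf{A}_0^{-1/2}
= \bigl[\mathbf{A}_0^{1/2}(\mathbf{D}\mathbf{A}_0^{-1})\mathbf{A}_0^{1/2}\bigr]\cdot\bigl[\mathbf{A}_0^{1/2}(\mathbf{M}^{-1}\mathbf{A}_0^{-1})\mathbf{A}_0^{1/2}\bigr] =: P\,Q.
\end{align*}
Both $P$ and $Q$ are congruences of symmetric matrices by $\mathbf{A}_0^{1/2}$, hence symmetric; by Sylvester's law of inertia $P$ inherits positive semi-definiteness from $\mathbf{D}\mathbf{A}_0^{-1}$ and $Q$ inherits positive definiteness from $\mathbf{M}^{-1}\mathbf{A}_0^{-1}$.

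The final step is the classical observation that if $P$ is symmetric PSD and $Q$ is symmetric PD, then $PQ$ is similar to $Q^{1/2}PQ^{1/2}$ (conjugation by $Q^{1/2}$), which is itself symmetric positive semi-definite. Therefore $PQ$ has real non-negative eigenvalues, and since $\mathbf{D}\mathbf{M}^{-1}$ is similar to $PQ$ by construction, it inherits the same spectrum, giving the result. The proof is essentially mechanical bookkeeping once one spots the correct $\mathbf{A}_0^{1/2}$-conjugation; the only mild subtlety is recognizing that the two hypotheses package up so cleanly into a PSD-times-PD product after this change of basis.
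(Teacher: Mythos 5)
Your proof is correct and follows essentially the same route as the paper's: both combine the positive definiteness of $\mathbf{A}_0\left[\mathbf{m}_2'(\bbu)-(\bbf'(\bbu))^2\right]$ from Proposition~\ref{prop:1} with the symmetry hypothesis on $\mathbf{D}\mathbf{A}_0^{-1}$, and conclude by exhibiting a similarity of $\varepsilon\omega\tilde{\Omega}^{-1}=\mathbf{D}\left[\mathbf{m}_2'(\bbu)-(\bbf'(\bbu))^2\right]^{-1}$ to a symmetric positive semi-definite matrix. The only cosmetic difference is that you conjugate twice (first by $\mathbf{A}_0^{1/2}$, then by $Q^{1/2}$), whereas the paper reaches the same product-of-PSD-and-PD structure in a single conjugation by $\mathbf{P}^{\pm 1/2}$ with $\mathbf{P}=\left[\mathbf{m}_2'(\bbu)-(\bbf'(\bbu))^2\right]\mathbf{A}_0^{-1}$.
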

Note that assuming the symmetry of $\mathbf{A}_0 \mathbf{D}$ or $ \mathbf{D} \mathbf{A}_0^{-1}$ is equivalent since $\mathbf{A}_0\big ( \mathbf{D} \mathbf{A}_0^{-1}\big ) \mathbf{A}_0=\mathbf{A}_0 \mathbf{D}$.
\begin{proof}Recall that $\mathbf{K} =\mathbf{A}_0 \left[ \mathbf{m}_2'(\bbu) - (\bbf'(\bbu))^2 \right] $
is symmetric positive definite,  so that $$ \mathbf{A}_0^{-1}\mathbf{K}\mathbf{A}_0^{-1}=\left[ \mathbf{m}_2'(\bbu) - (\bbf'(\bbu))^2 \right]\mathbf{A}_0^{-1}:=\mathbf{P}$$ is symmetric with positive eigenvalues.
 We have from Eq.~(\ref{eq:condition_with_MRT})
\begin{equation*}
 \mathbf{P}^{-1/2} \left( \varepsilon \omega \tilde{\Omega}^{-1} \right) \mathbf{P}^{1/2} = \mathbf{P}^{-1/2} \left(\mathbf{D} \mathbf{A}_0^{-1} \right)\mathbf{P}^{-1/2}
\end{equation*}
so that $ \varepsilon \omega \tilde{\Omega}^{-1}$ is similar to $\mathbf{P}^{-1/2} \mathbf{D} \mathbf{A}_0 \mathbf{P}^{-1/2}$ which is symmetric positive semi-definite. Then $\varepsilon \tilde{\Omega}^{-1}$ is diagonalizable with non-negative eigenvalues.
\end{proof}

\section{Time and space discretization: arbitrary high-order method}

In this work, we adopt the numerical discretization developed in~\cite{Torlo}. We present it for the collision matrix model (\ref{eq:kinetic_MRT}), knowing that the more common system (\ref{eq:kinetic_equation_retained}) can be recovered in the particular case $\Omega = \mathbf{I}_{kp}$. It relies on two ingredients. The first one is a defect correction (DeC) strategy that allows us to construct schemes with a given accuracy independent of the relaxation matrix. The second ingredient is the spatial discretization which is similar to what is done in \cite{Torlo} and inspired by \cite{iserle}.  The outcome is a scheme that is of order $q$ in space and time, independently of the relaxation parameter. The integer $q$ can be chosen arbitrarily.

\subsection{Time discretization: deferred correction IMEX method}

We want to have a robust and accurate time integration of (\ref{eq:kinetic_MRT}). In order to get rid off the stiffness induced by the relaxation term, the idea, already described in \cite{Torlo},  is to introduce two operators for solving \eqref{eq:kinetic_equation_adim}, $\mathcal{L}_1$ and $\mathcal{L}_2$ such that
\begin{enumerate}
\item the $\mathcal{L}_i=\delta_t^i \bF+\delta_x^i \bF-\mathbf{S}_i$ write as a sum of a temporal contribution, $\delta_t^i \bF$ that approximates the time derivative, a spatial contribution $\delta_x^i \bF$ that approximates $\Lambda \dpar{\bF}{x}$ and a source term $\mathbf{S}^i$ for the relaxation term,
\item $\mathbf{S}^1=\mathbf{S}^2=\mathbf{S}$, $\P \mathbf{S}=0$,
\item $\mathcal{ L}_2(\bF)=0$ solves \eqref{eq:kinetic_MRT} with $q$-th order,
\item $\P\mathcal{L}_1(\bF)=0$ is explicit in time,
\item  for any $\bF$, $\mathcal{L}_2(\bF)-\mathcal{L}_1(\bF)=O(\Delta t)$. 
\end{enumerate}
In practice, we will see that the property $\mathbf{S}^1=\mathbf{S}^2=\mathbf{S}$, $\P \mathbf{S}=0$ is key in establishing this approximation property because there is no stiff term in $\mathcal{L}_2(\bF)-\mathcal{L}_1(\bF)$. Defining $t_n$ the discrete time at time step $n$, the operators $\mathcal{L}_i$ depend on $\bF(t_n)$ and possibly on previous time steps. 
Then, as shown in \cite{Torlo}, the algorithm
\begin{enumerate}
\item $\bF^{(0)}=\bF(t_n)$,
\item $\bF^{(p+1)}$ solution of $\mathcal{L}_1(\bF^{(p+1)})=\mathcal{L}_1(\bF^{(p)})-\mathcal{L}_2(\bF^{(p)})$,
\end{enumerate}
is such that $\bF^{(q)}-\bF(t_{n+1})=O(\Delta t^q)$ where $\bF(t_{n+1})$ is the solution of $\mathcal{L}_2(\bF)=0$ at time $t_{n+1}=t_n+\Delta t$. 

 In the present section, we first introduce a first-order IMEX scheme that can be made fully explicit. Then, following \cite{Torlo}, we introduce general explicit high-order schemes based on implicit Runge-Kutta integrations together with a deferred correction algorithm. In all this section, we drop the specification of the space variable $x$, knowing that every operations are local in space except for the discrete derivation $\delta_x^i \bF$ which will be discussed in Sec.~\ref{sec:space_disc}.

\subsubsection{First-order IMEX scheme}
\label{sec:first_order_IMEX}

Following~\cite{Torlo}, we use a first-order explicit integration for the convective part, and a first-order implicit integration for the collision term which behaves as a stiff term. Integrating between $t_n$ and $t_{n+1} = t_n + \Delta t$, this reads
\begin{align}
    \bF (t_{n+1}) - \bF(t_n) + \Delta t \Lambda \delta_x^i \bF (t_n) = \Delta t \, \frac{\Omega }{\omega \varepsilon}(\P \bF (t_{n+1})) \left[ \M(\P \bF (t_{n+1}) - \bF(t_{n+1}) \right],
\end{align}
where we recall that, using \eqref{Condition:T}, the matrix $\Omega/(\omega \varepsilon)$ depends on the solution $\P \bF$  which is here evaluated at time $t_{n+1}$. This scheme is implicit, but can be made fully explicit by first applying the projector $\P$ to the solution at time $t_{n+1}$, leading to
\begin{align}
    \P \bF(t_{n+1}) = \P \bF (t_n) - \Delta t \P \Lambda \delta_x^i \bF(t_n),
\end{align}
so that $\M(\P \bF(t_{n+1}))$ and $\Omega/(\omega \varepsilon)(\P \bF(t_{n+1}))$ can be explicitly computed. Then defining $\hat{\Omega}_{n+1} = \Delta t \, \Omega/(\omega \varepsilon) (\P \bF (t_{n+1}))$, $\bF(t_{n+1})$ can be explicitly computed by reversing a linear system leading to:
\begin{align}
    \bF(t_{n+1}) = \left[ \mathbf{I}_{kp} + \hat{\Omega}_{n+1}^{-1} \right]^{-1} \left\{ \hat{\Omega}_{n+1}^{-1} \left[ \bF(t_n) -\Delta t \Lambda \delta_x^i \bF(t_n) \right] + \M (\P \bF(t_{n+1})) \right\}.
\end{align}
Note that this scheme only involves $\hat{\Omega}_{n+1}$ by its inverse matrix $\hat{\Omega}_{n+1}^{-1}$, which can be computed even when $\mathbf{D}$ is not inversible by \eqref{Condition:T}.

In this section, we have described a first-order method in time and space that is explicit. It does not need to use the DeC method. For higher order in time method, we do need DeC, so we need an operator $\mathcal{L}_1$. The spatial and temporal approximation will be the same as here, however the relaxation term will be approximated by  the same approximation as for the $\mathcal{L}_2$ operator, to be defined in the following section.

\subsubsection{High-order: IMEX Runge-Kutta schemes with deferred correction}\label{sec:RK}

We now want to build robust arbitrary high-order schemes for \eqref{eq:kinetic_MRT}. To this extent, let us rewrite the semi-discrete system as 
\begin{align}
    \frac{\mathrm{d} \bF}{\mathrm{d} t} = - \Lambda \delta_x^i \bF + \frac{\Omega}{\omega \varepsilon} (\P \bF) \left( \M(\P \bF) - \bF \right) \equiv \mathcal{F}(\bF).
\end{align}
This system of ODE can be numerically discretized using implicit RK methods of order $q$ in time, considering $s$ sub-time nodes denoted as $c_1 = 0 < c_2 < .. < c_s = 1$. Knowing the solution as time $t_n$, the updated one at time $t_{n+1} = t_n + \Delta t$ is given by:
\begin{align}
    \forall j \in \{1, \dots s \}, \qquad & \bF_j = \bF (t_n) + \Delta t \sum_{k=1}^s a_{jk} \mathcal{F} (\bF_k), \label{eq:implicit_RK} \\
    & \bF(t_{n+1}) = \bF(t_n) + \Delta t \sum_{k=1}^s b_k \mathcal{F}(\bF_k),
\end{align}
where $a_{jk}$ and $b_k$ are appropriate coefficients depending on the scheme under consideration. Coefficients $a_{jk}$ are related to the subtime nodes $c_j$ through the following consistency condition:
\begin{align}
    \forall j \in \{1, \dots s \},\qquad \sum_{k=1}^s a_{jk} = c_j.
\end{align}
Also note that the last step of this generalized RK scheme, involving $b_k$, is fully explicit. Therefore, we will only focus on Eq.~(\ref{eq:implicit_RK}). This brings us to define the following vectors of size $kps$:
\begin{align}
    \hat{\bF} = 
    \begin{pmatrix}
        \bF_1 \\ \vdots \\ \bF_s
    \end{pmatrix}, \qquad 
    \hat{\bF}_0 =
    \begin{pmatrix}
        \bF(t_n) \\ \vdots \\ \bF(t_n)
    \end{pmatrix}, \qquad
    \hat{\M} = 
    \begin{pmatrix}
        \M (\P \bF_1) \\ \vdots \\ \M (\P \bF_s)
    \end{pmatrix},
\end{align}
together with the following matrices in $\mathcal{M}_{kps}(\R)$:
\begin{align}
    \hat{\mathbf{A}} =  \Delta t \, 
    \mathbf{A} \otimes \mathbf{I}_{kp}, \qquad
    \hat{\Omega} =     \begin{pmatrix}
        \Omega/(\omega \varepsilon) (\P \bF_1) & \hdots & 0 \\
        \vdots & \ddots & \vdots \\
        0 & \hdots & \Omega/(\omega \varepsilon) (\P \bF_s)
    \end{pmatrix}, \qquad
    \hat{\Lambda} = 
    \mathbf{I}_s \otimes \Lambda.
\end{align}
With these notations, Eq.~(\ref{eq:implicit_RK}) reads:
\begin{align}
    \hat{\bF}  = \hat{\bF}_0 - \hat{\mathbf{A}} \hat{\Lambda} \delta_x^i \hat{\bF} + \hat{\mathbf{A}} \hat{\Omega} (\hat{\M} - \hat{\bF}),
\end{align}
which leads us to define a high-order operator $\mathcal{L}^2$ acting on $\hat{\bF}$ as
\begin{align}
    \mathcal{L}^2(\hat{\bF}) = \hat{\bF}  - \hat{\bF}_0 + \hat{\mathbf{A}} \hat{\Lambda} \delta_x^i \hat{\bF} - \hat{\mathbf{A}} \hat{\Omega} (\hat{\M} - \hat{\bF}).
\end{align}
The high-order scheme simply reads $\mathcal{L}^2 (\hat{\bF}) = 0$. However, this scheme is not explicit, a priori because of two terms: (1) the transport term $\hat{\Lambda} \delta_x^i \hat{\bF}$ and (2) the collision term $\hat{\Omega} (\hat{\M} - \hat{\bF})$. In fact, as mentioned in \cite{Torlo} and in the same was as with the first-order IMEX scheme, the implicitness of the collision term vanishes after applying the projector $\P$ to $\mathcal{L}^2$. However, the implicitness of the transport term remains. To address it, we use a deferred correction scheme, consisting in the iterative resolution of an explicit problem involving a low-order scheme $\mathcal{L}^1$. In the present context, we define $\mathcal{L}^1$ as:
\begin{align}
    \mathcal{L}^1(\hat{\bF}) = \hat{\bF} - \hat{\bF}_0 + \hat{\mathbf{C}} \hat{\Lambda} \delta_x^i \hat{\bF}_0 - \hat{\mathbf{A}} \hat{\Omega} (\hat{\M} - \hat{\bF}),
\end{align}
where
\begin{align}
    \hat{\mathbf{C}} = 
    \begin{pmatrix}
        c_1 \mathbf{I}_{kp} & \hdots & 0 \\
        \vdots & \ddots & \vdots \\
        0 & \hdots & c_s \mathbf{I}_{kp}
    \end{pmatrix}.
\end{align}
Based on it, the principle of the deferred correction algorithm reads:
\begin{enumerate}
	\item We define:
\begin{align}
    \hat{\bF}^{(0)} \equiv \hat{\bF}_0.
\end{align}
	\item The following iterative scheme is solved:
\begin{align}
	\forall p \in \{ 0, \dots, M-1 \}, \qquad \mathcal{L}^1 (\hat{\bF}^{(p+1)}) = \mathcal{L}^1 (\hat{\bF}^{(p)}) - \mathcal{L}^2 (\hat{\bF}^{(p)}). \label{eq:iteration_DeC}
\end{align}
	\item The updated solution at time $t+\Delta t$ is obtained by setting
\begin{align}
	\bF(t_{n+1}) = \bF(t_n) + \Delta t \sum_{k=1}^s b_k \mathcal{F} (\bF_k^{(p+1)}).
\end{align}
\end{enumerate}
Note that in many implicit RK schemes (\textit{e.g.} Lobato IIIA, Lobato IIIC, see section \ref{sec:lobatto} bellow), we have $\forall k \in \{ 1, \dots, s \},\ b_k = a_{sk}$, so that the last step of the algorithm can be reduced to
\begin{align}
    \bF(t_{n+1}) = \bF^{(p+1)}_s.
    \label{eq:RK_simplified_update}
\end{align}
It can be shown that this iterative scheme has a formal error of $\Delta t^{\min(q, M)}$. Hence, by taking $M=q$, the order of convergence of the implicit RK scheme is recovered.

Using the definitions of $\mathcal{L}^1$ and $\mathcal{L}^2$, we have
\begin{align}
    \mathcal{L}^1 (\hat{\bF}^{(p)}) - \mathcal{L}^2 (\hat{\bF}^{(p)}) = \hat{\mathbf{C}} \hat{\Lambda} \delta_x^i \hat{\bF}_0 - \hat{\mathbf{A}} \hat{\Lambda} \delta_x \hat{\bF}^{(p)},
\end{align}
so that (\ref{eq:iteration_DeC}) yields
\begin{align}
     \hat{\bF}^{(p+1)} - \hat{\mathbf{A}} \hat{\Omega}^{(p+1)} (\hat{\M}^{(p+1)} - \hat{\bF}^{(p+1)}) = \hat{\bF}_0  - \hat{\mathbf{A}} \hat{\Lambda} \delta_x \hat{\bF}^{(p)}.
     \label{eq:DeC_scheme}
\end{align}
This scheme is implicit, but can be made explicit by first applying the projector:
\begin{align}
    \P \hat{\bF}^{(p+1)} = \P \hat{\bF}_0  - \P \hat{\mathbf{A}} \hat{\Lambda} \delta_x \hat{\bF}^{(p)},
    \label{eq:DeC_macros}
\end{align}
such that $\hat{\M}^{(p+1)}$ and $\hat{\Omega}^{(p+1)}$ can be explicitly computed, and then reversing the following linear system:
\begin{align}
    \left[ \mathbf{I}_{kps} + \hat{\mathbf{A}} \hat{\Omega}^{(p+1)} \right] \hat{\bF}^{(p+1)} = \hat{\bF}_0 - \hat{\mathbf{A}} \hat{\Lambda} \delta \hat{\bF}^{(p)} + \hat{\mathbf{A}} \hat{\Omega}^{(p+1)}\hat{\M}^{(p+1)}.
    \label{eq:DeC_implicit_to_reverse}
\end{align}
Dropping the exponent $(p+1)$ on $\hat{\Omega}$ for the sake of convenience, the solution can be written as:
\begin{align}
    \hat{\bF}^{(p+1)} = \hat{\Omega}^{-1} \left[ \hat{\Omega}^{-1} + \hat{\mathbf{A}} \right]^{-1} \left( \hat{\bF}_0 - \hat{\mathbf{A}}\hat{\Lambda} \delta \hat{\bF}^{(p)} \right) + \hat{\mathbf{A}} \left[ \hat{\Omega}^{-1} + \hat{\mathbf{A}} \right]^{-1} \hat{\M}^{(p+1)}.
    \label{eq:DeC_explicit}
\end{align}
As for the proposed first-order IMEX scheme, this scheme only involves $\hat{\Omega}$ through its inverse matrix $\hat{\Omega}^{-1}$, which can be computed even when $\mathbf{D}$ is not invertible \textit{via} \eqref{Condition:T}. However, a condition for solving this problem is that the matrix $\hat{\Omega}^{-1} + \hat{\mathbf{A}}$ must be invertible, which may not always be the case. For instance, when considering the Navier-Stokes equations for fluid dynamics, the absence of diffusion affecting mass conservation implies that the first row of $\hat{\Omega}^{-1}$ is null. Furthermore, if an implicit RK scheme like Lobato IIIA is used, the first row of $\hat{\mathbf{A}}$ is also null~\cite{Hairer}. In this simple case, the matrix $\hat{\Omega}^{-1} + \hat{\mathbf{A}}$ is not invertible. Hence, we will focus on schemes where the first row of $\mathbf{A}$ is non-null. This implies that, as seen in Eq.~(\ref{eq:implicit_RK}), even the first sub-time node $c_1=0$ is reconstructed, resulting in $\hat{\bF}_1 \neq \hat{\bF}(t_n)$. 
The Lobato IIIC scheme is an exemple of such RK methods, it will be the one adopted in the following~\cite{Hairer}. Note that using the DeC algorithm with a Lobato IIIC scheme can also be interpreted as an arbitrary derivative (ADER) method~\cite{Veiga2023}.

Interestingly, the non-diffusive case ($\mathbf{D}=\mathbf{0})$ can be recovered as the formal limit $\hat{\Omega}^{-1} = \mathbf{0}$, leading to the very simple update of populations:
\begin{align}
    \hat{\mathbf{F}}^{(p+1)} = \hat{\mathbb{M}}^{(p+1)}.
\end{align}

\subsubsection{Examples of  schemes}\label{sec:lobatto}

Below are some particular examples of Lobato IIIC schemes (from~\cite{Hairer}).

\paragraph{Second-order scheme} We consider the following Lobato IIIC second-order scheme ($q=2$) with  with two sub-time nodes ($s=2$) and
\begin{align}
    \mathbf{A} = 
    \begin{pmatrix}
        1/2 & -1/2 \\
        1/2 & 1/2
    \end{pmatrix}, 
    \qquad \mathbf{b} =
    \begin{pmatrix}
        1/2 & 1/2
    \end{pmatrix}.
    \label{eq:LobatoIIIC_O2}
\end{align}

\paragraph{Fourth-order scheme} We consider the following Lobato IIIC fourth-order scheme ($q=4$) with  with three sub-time nodes ($s=3$):
\begin{align}
    \mathbf{A} = 
    \begin{pmatrix}
        1/6 & -1/3 & 1/6 \\
        1/6 & 5/12 & -1/12 \\
        1/6 & 2/3 & 1/6
    \end{pmatrix}, \qquad
    \mathbf{b} = 
    \begin{pmatrix}
        1/6 & 2/3 & 1/6
    \end{pmatrix}.
    \label{eq:LobatoIIIC_O4}
\end{align}

 \paragraph{Sixth-order scheme}
 We consider the following Lobato IIIC sixth-order scheme ($q=6$) with four sub-time nodes ($s=4$):
 \begin{align}
     \mathbf{A} = 
     \begin{pmatrix}
         1/12 & -\sqrt{5}/12 & \sqrt{5}/12 & -1/12 \\
         1/12 & 1/4 & (10-7\sqrt{5})/60 & \sqrt{5}/60 \\
         1/12 & (10+7\sqrt{5})/60 & 1/4 & -\sqrt{5}/60 \\
         1/12 & 5/12 & 5/12 & 1/12
     \end{pmatrix}, \qquad 
     \mathbf{b} = 
     \begin{pmatrix}
         1/12 & 5/12 & 5/12 & 1/12
     \end{pmatrix}.
 \end{align}

Note that with these RK schemes, coefficients $b_k$ are equal to the last line of $\mathbf{A}$ so that the last step of the implicit RK scheme is redundant and Eq.~(\ref{eq:RK_simplified_update}) can be used. In the following, we will only focus on first-, second- and fourth-order integrations. The extension to higher-order methods is straightforward.

\subsection{Space discretization}
\label{sec:space_disc}

As discussed in~\cite{Torlo}, the only question left  to define a stable numerical scheme is to find numerical discretizations $\delta_x^i$ ensuring the stability of the convection (collisionless) scheme, assuming that the relaxation terms introduce diffusion. In the present work, we consider the space discretizations previously adopted in~\cite{abgrall2021preliminary} and inspired from \cite{iserle}, recalled below. We note $f_i$ a population being advected at a kinetic velocity $a_i$ of $\Lambda$ and $\Delta x$ is the uniform mesh size.

\paragraph{First-order ($\delta_x^1$)} We use the upwind scheme:
\begin{align}
    \delta_x^1 f_i (x,t) = 
    \begin{cases}
        [ f_i(x, t) - f_i(x-\Delta x, t) ]/\Delta x \qquad \mathrm{if}\ a_i \geq 0, \\
        [ f_i(x+\Delta x, t) - f_i(x,t) ]/\Delta x \qquad \mathrm{else}.
    \end{cases}
\end{align}

\paragraph{Second-order ($\delta_x^2$)} We define:
\begin{align}
    \delta_x^2 f_i(x,t) =
    \begin{cases}
        [ f_i(x+\Delta x, t)/3 + f_i(x, t)/2 - f_i(x-\Delta x, t) + f_i(x-2\Delta x, t)/6 ] /\Delta x \qquad & \mathrm{if}\ a_i \geq 0, \\
        [-f_i(x-\Delta x, t)/3 - f_i(x, t)/2 + f_i(x+\Delta x, t) - f_i(x+2\Delta x, t)/6 ] /\Delta x & \mathrm{else}.
    \end{cases}
\end{align}

\paragraph{Fourth-order ($\delta_x^4$)} We define:
\begin{align}
    \delta_x^4 f_i(x,t) = \frac{1}{12 \Delta x} \left[ f_i(x-2\Delta x, t) - f_i(x+2\Delta x, t) \right] + \frac{2}{3 \Delta x} \left[ f_i(x+\Delta x, t) - f_i(x-\Delta x, t) \right].
\end{align}

Regarding the fourth-order discretization, since the space derivative operator is independent of the considered wave, note that the numerical method can be equivalently recast as a scheme acting on moments of the populations $\mathbf{F}$, i.e. on variables $(\bbu^\varepsilon, \bbv^\varepsilon)$. This observation may be considered for improving the efficiency of the fourth-order scheme. 

The stability of the ensuing numerical schemes based on Lobato IIIC time discretizations is investigated in the following section.

\subsection{Linear stability analysis}

In this section, the linear stability of the transport term of the kinetic model is investigated. We therefore focus on the following simplified 1D transport equation,
\begin{align}
    \frac{\partial y}{\partial t} = - a \frac{\partial y}{\partial x},
\end{align}
where $y: \mathbb{R}^+ \times \mathbb{R} \rightarrow \mathbb{R}$ is a differentiable function of time and space and $a>0$ is an advection velocity. Eventually performing a Fourier transform in space, we define $\hat{y}(k,t) = \int y(x,t) e^{-ikx} \mathrm{d} x$ where $k \in \mathbb{R}$ is a wavenumber. After discretizing time in sub-steps $\{ t_n,\ n \in \mathbb{N} \}$ and space in points $\{ x_j,\ j \in \mathbb{Z} \}$ with uniform time step $\Delta t$ and mesh size $\Delta x$, we note $y_j^n$ the solution of the numerical scheme at $(t_n, x_j)$ and $\hat{y}^n$ its Fourier transform. Considering the discretized space derivative $\delta_x^i$, the Fourier transform of $\delta_x^i y^n_j$ is $g \hat{y}^n/\Delta x$ with:
\begin{align}
    & \mathrm{First-order}\ (\delta_x^1): & g = 1-e^{- \mathrm{i} \theta}, \\
    & \mathrm{Second-order}\ (\delta_x^2): & g = \frac{1}{3} e^{\mathrm{i} \theta} + \frac{1}{2} - e^{-\mathrm{i} \theta} + \frac{1}{6} e^{2 \mathrm{i} \theta},\\
    & \mathrm{Fourth-order}\ (\delta_x^4): & g = \mathrm{i} \left( \frac{4}{3} \sin(\theta) -\frac{1}{6} \sin(2 \theta) \right),
\end{align}
where $\theta = k \Delta x \in \mathbb{R}$. An amplification factor can be defined as $G=\hat{y}^{n+1}/\hat{y}^n$ and absolute stability is ensured provided that $|G| \leq 1$ for any $k \in \mathbb{R}$. Following these notations, numerical stability of the implicit $\mathcal{L}^2$ operator and of the DeC algorithm are investigated below for first-, second- and fourth-order time integrations.

\subsubsection{First-order time integration}

The first-order IMEX scheme proposed in Sec.~\ref{sec:first_order_IMEX} is based on an explicit forward Euler time integration for the transport term. This reads
\begin{align}
    \frac{\hat{y}^{n+1}-\hat{y}^n}{\Delta t} = -a \frac{g}{\Delta x} \hat{y}^n,
\end{align}
so that the amplification factor is
\begin{align}
    G = 1+z, \qquad z = -\lambda g,
\end{align}
where $\lambda = a \Delta t/\Delta x$ is the CFL number. The stability criterion of the explicit Euler time integration is $|1+z| \leq 1$ and the relation $z=-\lambda g$ eventually provides restrictions on the CFL number $\lambda$ to satisfy this criterion, depending on the space discretization characterized by $g$. The stability region in the complex plane together with the possible values of $z$ for different CFL numbers and space discretizations are displayed in Fig.~\ref{fig:stability_O1}. With the operator $\delta_x^1$, a necessary and sufficient condition for the stability of this scheme is $\lambda \leq 1$. With $\delta_x^2$ and $\delta_x^4$, this scheme is unconditionally unstable since stability can only be ensured for $\lambda=0$. For $\delta_x^4$, the instability can simply be observed by the fact that $z \in \mathrm{i} \mathbb{R}$, so that the stability condition $|1+z| \leq 1$ can only be met for $z=0$.

\begin{figure}[h!]
    \begin{subfigure}{0.33\textwidth}
    \includegraphics{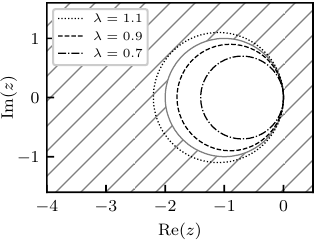}
    \caption{$\delta_x^1$}
    \end{subfigure}
    \begin{subfigure}{0.33\textwidth}
    \includegraphics{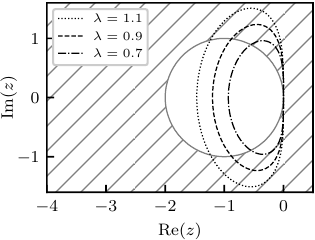}
    \caption{$\delta_x^2$}
    \end{subfigure}
    \begin{subfigure}{0.33\textwidth}
    \includegraphics{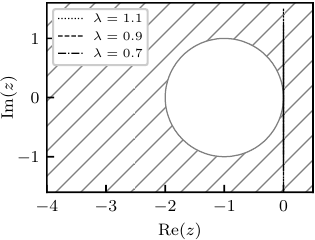}
    \caption{$\delta_x^4$}
    \end{subfigure}
    \caption{Stability plots of the explicit Euler time integration for transport equation with space discretizations $\delta_x^1$, $\delta_x^2$ and $\delta_x^4$. Hashed area: instability zone of the time integration scheme ($|G|>1$). Dashed lines: possible values of $z$ for varying CFL numbers $\lambda$ and space discretization operators.}
    \label{fig:stability_O1}
\end{figure}

\subsubsection{Second-order time integration}

We now focus on the $\mathcal{L}^2$ time integration given by the second-order Lobato IIIC scheme of Eq.~(\ref{eq:LobatoIIIC_O2}). In the Fourier space, the scheme reads
\begin{align}
    \hat{\mathbf{y}}^{n+1} = \hat{\mathbf{y}}_0^n - \lambda g \mathbf{A} \hat{\mathbf{y}}^{n+1},
\end{align}
where $\hat{\mathbf{y}}^{n+1}$ is a vector of size $s=2$ whose components are the Fourier transforms of the solution at each updated sub-time node (the last line is equal to $\hat{y}^{n+1}$) and $\hat{\mathbf{y}}_0^{n} = [\hat{y}^n, \hat{y}^n]^T$. Inverting the implicit system yields
\begin{align}
    \hat{\mathbf{y}}^{n+1} = \left[ \mathbf{Id} - z \mathbf{A} \right]^{-1} \hat{\mathbf{y}}_0^n,
\end{align}
where $z=-\lambda g$ and
\begin{align}
    \left[ \mathbf{Id} - z \mathbf{A} \right]^{-1} = \frac{1}{z^2-2z+2} 
    \begin{bmatrix}
        2-z & -z \\
        z & 2-z
    \end{bmatrix}.
\end{align}
The amplification factor is obtained by summing up the components of the last row of this matrix, which yields
\begin{align}
    G = \frac{2}{z^2-2z+2}, \qquad z = -\lambda g.
\end{align}
Stability curves obtained for this scheme are displayed in Fig.~\ref{fig:stability_O2_L2} for different $\delta$ operators. The A-stability of the Lobato IIIC scheme is recovered, leading to an unconditional stability in terms of CFL number.

\begin{figure}[h!]
    \begin{subfigure}{0.33\textwidth}
    \includegraphics{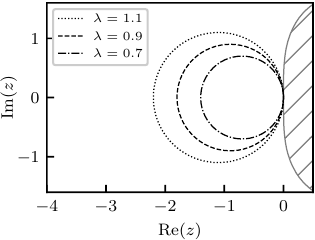}
    \caption{$\delta_x^1$}
    \end{subfigure}
    \begin{subfigure}{0.33\textwidth}
    \includegraphics{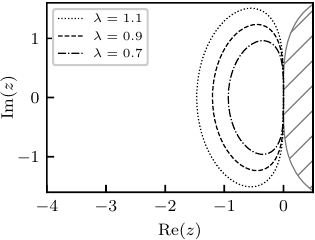}
    \caption{$\delta_x^2$}
    \end{subfigure}
    \begin{subfigure}{0.33\textwidth}
    \includegraphics{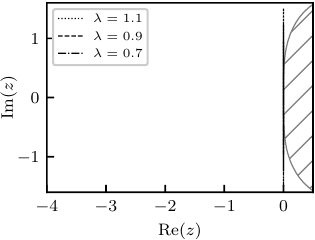}
    \caption{$\delta_x^4$}
    \end{subfigure}
    \caption{Stability plots of the $\mathcal{L}^2$ operator with second-order Lobato IIIC time integration for transport equation with space discretizations $\delta_x^1$, $\delta_x^2$ and $\delta_x^4$. Hashed area: instability zone of the time integration scheme ($|G|>1$). Dashed lines: possible values of $z$ for varying CFL numbers $\lambda$ and space discretization operators.}
    \label{fig:stability_O2_L2}
\end{figure}

Let us now consider the DeC algorithm applied to this scheme. The iterations read:
\begin{align}
    & \hat{\mathbf{y}}^{n+1, (0)} = \hat{\mathbf{y}}^{n}_0, \nonumber \\
    & \forall p \in \llbracket 0, M-1 \rrbracket, \qquad \hat{\mathbf{y}}^{n+1, (p+1)} = \hat{\mathbf{y}}_0^n - \lambda g \mathbf{A} \hat{\mathbf{y}}^{n+1, (p)}, \nonumber \\
    & \hat{\mathbf{y}}^{n+1} = \hat{\mathbf{y}}^{n+1, (M)}.
\end{align}
For two iterations ($M=2$), the scheme can be written in the following compact form:
\begin{align}
    \hat{\mathbf{y}}^{n+1} = \left[ \mathbf{Id} + z \mathbf{A} + z^2 \mathbf{A}^2 \right] \hat{\mathbf{y}}_0^n,
\end{align}
where
\begin{align}
    \mathbf{Id} + z \mathbf{A} + z^2 \mathbf{A}^2 = 
    \begin{bmatrix}
        1 + z/2 & -(z+z^2)/2 \\
        (z+z^2)/2 & 1+z/2
    \end{bmatrix}.
\end{align}
The amplification factor is obtained by summing up the components of the last line of this matrix, which yields
\begin{align}
    G = 1 + z + \frac{z^2}{2}, \qquad z=-\lambda g.
\end{align}
Stability curves are displayed for this scheme in Fig.~\ref{fig:stability_O2_DeC}. With the $\delta_x^1$ operator, a necessary and sufficient condition for stability is $\lambda \leq 1$. With $\delta_x^2$, a slightly lower CFL number can be reached ($\lambda < 0.87$). With $\delta_x^4$, this scheme is unconditionally unstable.

Maximal CFL numbers obtained for this scheme and for different numbers of iterations of the DeC algorithm are compiled in Table~\ref{tab:max_CFL_LobatoIIIC}.

\begin{figure}[h!]
    \begin{subfigure}{0.33\textwidth}
    \includegraphics{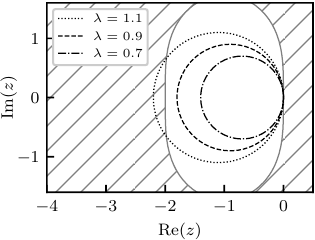}
    \caption{$\delta_x^1$}
    \end{subfigure}
    \begin{subfigure}{0.33\textwidth}
    \includegraphics{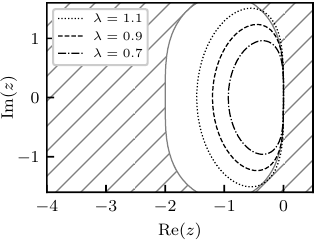}
    \caption{$\delta_x^2$}
    \end{subfigure}
    \begin{subfigure}{0.33\textwidth}
    \includegraphics{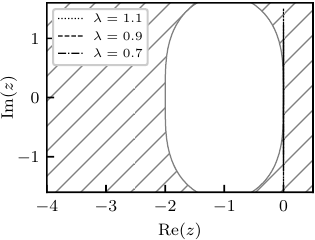}
    \caption{$\delta_x^4$}
    \end{subfigure}
    \caption{Stability plots of DeC time integration based on second-order Lobato IIIC for transport equation with space discretizations $\delta_x^1$, $\delta_x^2$ and $\delta_x^4$. Hashed area: instability zone of the time integration scheme ($|G|>1$). Dashed lines: possible values of $z$ for varying CFL numbers $\lambda$ and space discretization operators.}
    \label{fig:stability_O2_DeC}
\end{figure}

\subsubsection{Fourth-order time integration}

We now focus on the $\mathcal{L}_2$ algorithm involving the fourth-order Lobato IIIC scheme of Eq.~(\ref{eq:LobatoIIIC_O4}). Compared to its second-order counterpart, the only modification is the matrix $\mathbf{A}$ which leads to
\begin{align}
    \left[ \mathbf{Id} - z \mathbf{A} \right]^{-1} = \frac{1}{z^3-6z^2+18z-24}
    \begin{bmatrix}
        -3z^2 + 14z - 24 & -4z^2 + 8z & z^2-4z \\
        z^2 - 4z & 8z-24 & -z^2 + 2z \\
        -z^2 - 4z & 4z^2 - 16z & -3z^2 + 14z -24
    \end{bmatrix}.
\end{align}
The amplification factor is given by
\begin{align}
    G = \frac{-6z-24}{z^3-6z^2+18z-24}, \qquad z = -\lambda g.
\end{align}
Stability curves obtained for this scheme are displayed in Fig.~\ref{fig:stability_O4_L2} for different $\delta_x^i$ operators. As for its second-order counterpart, the A-stability of the Lobato IIIC scheme is recovered, leading to an unconditional stability in terms of CFL number.

\begin{figure}[h!]
    \begin{subfigure}{0.33\textwidth}
    \includegraphics{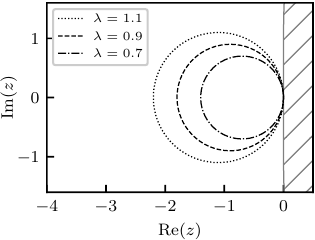}
    \caption{$\delta_x^1$}
    \end{subfigure}
    \begin{subfigure}{0.33\textwidth}
    \includegraphics{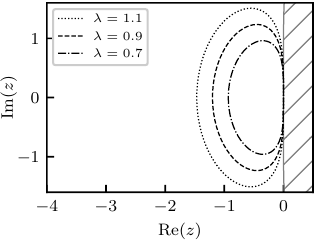}
    \caption{$\delta_x^2$}
    \end{subfigure}
    \begin{subfigure}{0.33\textwidth}
    \includegraphics{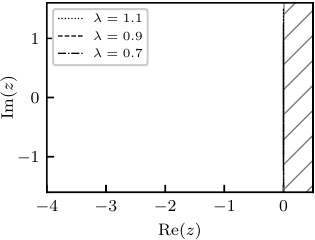}
    \caption{$\delta_x^4$}
    \end{subfigure}
    \caption{Stability plots of the $\mathcal{L}^2$ operator with fourth-order Lobato IIIC time integration for transport equation with space discretizations $\delta_x^1$, $\delta_x^2$ and $\delta_x^4$. Hashed area: instability zone of the time integration scheme ($|G|>1$). Dashed lines: possible values of $z$ for varying CFL numbers $\lambda$ and space discretization operators.}
    \label{fig:stability_O4_L2}
\end{figure}

The DeC scheme with four iterations ($M=4$) reads:
\begin{align}
    \hat{\mathbf{y}}^{n+1} = \left[ \mathbf{Id} + z \mathbf{A} + z^2 \mathbf{A}^2 + z^3 \mathbf{A}^3 + z^4 \mathbf{A}^4 \right] \hat{\mathbf{y}}_0^n,
\end{align}
where
\begin{align}
     & \mathbf{Id} + z \mathbf{A} + z^2 \mathbf{A}^2 + z^3 \mathbf{A}^3 + z^4 \mathbf{A}^4 = \frac{1}{576} \times \nonumber \\
     & \
     \begin{bmatrix}
         4z^4 + 96z+576 & 13z^4+12z^3-48z^2-192z & z^4+12z^3+48z^2+96z \\
         z^4 + 12z^3 + 48z^2 + 96z & (-23z^4 - 36z^3 + 144z^2 + 960z +2304)/4 & (13z^4 + 12z^3 - 48z^2 - 192z)/4 \\
         16z^4 + 48z^3 + 96z^2 + 96z & 4z^4 + 48z^3 + 192z^2 + 384z & 4z^4 + 96z + 576
     \end{bmatrix}.
\end{align}
The amplification factor is given by
\begin{align}
    G = 1+z+\frac{z^2}{2} + \frac{z^3}{6} + \frac{z^4}{24}, \qquad z=-\lambda g.
\end{align}
We recover a result recently demonstrated in~\cite{micalizzi2023new}: the amplification function of the ADER scheme is $G=\sum_{k=0}^M z^k/k!$. Stability curves obtained for this scheme are displayed in Fig.~\ref{fig:stability_O4_DeC} for different $\delta_x^i$ operator. We see that in any case, $\lambda >1$ can be reached. Detailed results of maximal CFL numbers are summarized in Table~\ref{tab:max_CFL_LobatoIIIC} depending on the number of iterations of the DeC algorithm.

\begin{figure}[h!]
    \begin{subfigure}{0.33\textwidth}
    \includegraphics{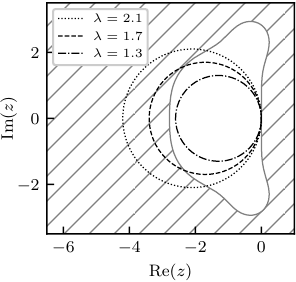}
    \caption{$\delta_x^1$}
    \end{subfigure}
    \begin{subfigure}{0.33\textwidth}
    \includegraphics{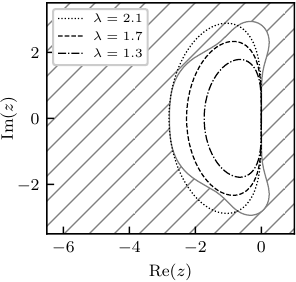}
    \caption{$\delta_x^2$}
    \end{subfigure}
    \begin{subfigure}{0.33\textwidth}
    \includegraphics{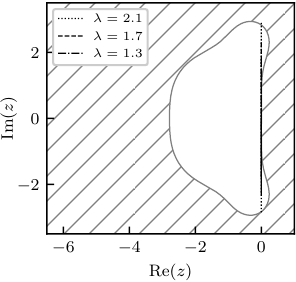}
    \caption{$\delta_x^4$}
    \end{subfigure}
    \caption{Stability plots of the DeC time integration based on fourth-order Lobato IIIC for transport equation with space discretizations $\delta_x^1$, $\delta_x^2$ and $\delta_x^4$. Hashed area: instability zone of the time integration scheme ($|G|>1$). Dashed lines: possible values of $z$ for varying CFL numbers $\lambda$ and space discretization operators.}
    \label{fig:stability_O4_DeC}
\end{figure}

\begin{table}[!ht]
  \centering
\begin{tabular}{|p{0.1\textwidth}|>{\centering}p{0.05\textwidth}||>{\centering}p{0.05\textwidth}|>{\centering}p{0.05\textwidth}|>{\centering}p{0.05\textwidth}|>{\centering}p{0.05\textwidth}|>{\centering}p{0.05\textwidth}|>{\centering\arraybackslash}p{0.05\textwidth}|}
\hline
\multicolumn{2}{|c|}{Scheme} & \multicolumn{6}{c|}{\# iterations} \\
\hline
Order & $\delta$ & 1 & 2 & 3 & 4 & 5 & 6 \\
\hline
2 & $\delta_x^1$ & 1 & 1 & 1 & 0.78 & 0.71 & 0.85 \\
2 & $\delta_x^2$ & 0 & 0.87 & 0.87 & 0.96 & 0.88 & 0.98 \\
2 & $\delta_x^4$ & 0 & 0 & 0 & 0.66 & 1.03 & 1.16 \\
\hline
4 & $\delta_x^1$ & 1 & 1 & 1.26 & 1.39 & 1.46 & 1.34\\
4 & $\delta_x^2$ & 0 & 0.87 & 1.63 & 1.75 & 1.81 & 1.77\\
4 & $\delta_x^4$ & 0 & 0 & 1.26 & 2.06 & 0.04 & 0.62\\
\hline
\end{tabular}
\caption{Critical CFL numbers $\lambda$ of Lobato IIIC schemes.}
\label{tab:max_CFL_LobatoIIIC}
\end{table}

Furthermore, for the sake of completeness and comparisons, similar stability analyses are performed with Lobato IIIA schemes of second and fourth orders~\cite{Hairer}. Maximal CFL numbers are compiled in Table~\ref{tab:max_CFL_LobatoIIIA}. Even though the stability can be affected by the choice of RK scheme, we see that when the minimal number iterations is performed, similar stability criteria are obtained with Lobato IIIA and Lobato IIIC. A result demonstrated in~\cite{micalizzi2023new} is recovered here: the DeC algorithm involving $M$ iterations of a $M^{th}$-order implicit RK scheme leads to the same stability function, whatever the implicit RK scheme. We conclude that the use of Lobato IIIC instead of Lobato IIIA does not affect the numerical stability.

\begin{table}[!ht]
  \centering
\begin{tabular}{|p{0.1\textwidth}|>{\centering}p{0.05\textwidth}||>{\centering}p{0.05\textwidth}|>{\centering}p{0.05\textwidth}|>{\centering}p{0.05\textwidth}|>{\centering}p{0.05\textwidth}|>{\centering}p{0.05\textwidth}|>{\centering\arraybackslash}p{0.05\textwidth}|}
\hline
\multicolumn{2}{|c|}{Scheme} & \multicolumn{6}{c|}{\# iterations} \\
\hline
Order & $\delta$ & 1 & 2 & 3 & 4 & 5 & 6 \\
\hline
2 & $\delta_1$ & 1 & 1 & 1 & 1 & 1 & 1\\
2 & $\delta_2$ & 0 & 0.87 & 1.22 & 1.02 & 1.08 & 1.24 \\
2 & $\delta_4$ & 0 & 0 & 1.46 & 1.46 & 0.03 & 0.07 \\
\hline
4 & $\delta_1$ & 1 & 1 & 1.26 & 1.39 & 1.77 & 1.77 \\
4 & $\delta_2$ & 0 & 0.87 & 1.63 & 1.75 & 2.06 & 2.06 \\
4 & $\delta_4$ & 0 & 0 & 1.26 & 2.06 & 2.52 & 2.52 \\
\hline
\end{tabular}
\caption{Critical CFL numbers $\lambda$ of Lobato IIIA schemes.}
\label{tab:max_CFL_LobatoIIIA}
\end{table}

\section{Application to scalar problems}
\label{sec:application_scalars}

We first assess the proposed method for the resolution of scalar problems in the form
\begin{align}
    \frac{\partial u}{\partial t} + \frac{\partial f(u)}{\partial x} = \alpha \frac{\partial^2 u}{\partial x},
\end{align}
where $u= u(x, t) \in \mathbb{R}$, $f:\mathbb{R} \rightarrow \mathbb{R}$ a convective flux and $\alpha \geq 0$ a constant diffusion parameter. In the present section, different expressions will be considered for the convective flux in order to solve (1) the diffusion equation, (2) the advection-diffusion equation, (3) the viscous Burgers equation. We first discuss on the adopted choice of waves for the kinetic model, then detail each equation under consideration. The purpose of this section is also to quantify the $\mathcal{O}(\varepsilon^2)$ consistency error inherent of the kinetic model, in order to propose a method for appropriately selecting the kinetic velocities in $\Lambda$.

In any case and following the stability analysis, the following CFL number are systematically considered:
\begin{itemize}
    \item First-order scheme (implicit Euler with $\delta_x^1$): $\lambda$ = 1,
    \item Second-order scheme (DeC with second-order Lobato IIIC, $\delta_x^2$): $\lambda = 0.8$,
    \item Fourth-order scheme (DeC with fourth-order Lobato IIIC, $\delta_x^4$): $\lambda = 2$.
\end{itemize}
Note that these CFL numbers are based on the advection velocity of the kinetic model $a$ ($\lambda = a \Delta t/\Delta x$) and are in general different from the standard definition of CFL number based on $|f'(u)|$ . To make it clear, the CFL number based on $a$ will be referred to as $\lambda$ and the one based on $|f'(u)|$ will be simply referred to as CFL.

\subsection{Wave model}

We consider the two-wave model of Natalini~\cite{Natalini} which makes the kinetic system equivalent to Jin-Xin model~\cite{Jin}. Using the notations of Example~\ref{ex:scalar}, the Maxwellian reads 
\begin{align}
    \mathbb{M}_1(u^\varepsilon) = \frac{1}{2} \left( u^\varepsilon - \frac{f(u^\varepsilon)}{a} \right), \qquad \mathbb{M}_2(u^\varepsilon) = \frac{1}{2} \left( u^\varepsilon + \frac{f(u^\varepsilon)}{a} \right).
\end{align}
The sub-characteristic condition $a > |f'(u^\varepsilon)|$ is a sufficient condition to make this model compatible with entropy inequalities. In this scalar case, the collision matrix simply reads $\Omega = \tilde{\Omega} \mathbf{I}_2$ where $\tilde{\Omega}$ is a scalar, and \eqref{Condition:T} leads to
\begin{align}
	\varepsilon \omega \tilde{\Omega}^{-1} = \frac{\alpha}{a^2 - f'(u^\varepsilon)^2}.
    \label{eq:relation_tau_alpha_scalar}
\end{align}
Note that the relaxation parameter of Example~\ref{ex:tau_scalar} is recovered if we set $\tau = \varepsilon \omega \tilde{\Omega}^{-1}$. Following Eq.~\eqref{eq:def_epsilon}, we define the Knudsen number as
\begin{align}
	\varepsilon= \frac{\alpha}{a \ell},
	\label{eq:def_knudsen_scalar}
\end{align}
where $\ell$ is a characteristic length that depends on the problem under consideration. 

\subsection{Diffusion equation}
\label{sec:diffusion_equation}

We first consider the parabolic diffusion equation and set: $f(u) = 0$. This example is of particular interest because the sub-characteristic condition does not provide us any particular constraint on the wave velocity $a$ (except that $a>0$). The wave velocity can therefore be arbitrarily chosen, which allows us to better highlight the consistency error in $\mathcal{O}(\varepsilon^2)$.

A 1D domain of size $L=1$ is initialized with
\begin{align}
    u(x,0) = 1 + 0.01 \exp \left( - \frac{(x-0.5)^2}{\delta^2} \right),
\end{align}
where $\delta = 0.1$. The diffusion coefficient is set to $\alpha=0.01$. The characteristic length of this problem is the standard deviation of the Gaussian function. Therefore, we take $\ell = \delta$ in the definition of $\varepsilon$
\eqref{eq:def_knudsen_scalar}.

Figure~\ref{fig:Diffusion_plot} displays the Gaussian shape obtained after diffusion at time $t=0.1$ with 100 points by the first-, second- and fourth-order methods and two values of $a$, leading to two values of the Knudsen number. They are compared with the exact solution, 
\begin{align}
    u_{exact}(x,t) = 1+0.01 \sqrt{\frac{1}{1+4\alpha t/\delta^2}} \exp \left( -\frac{(x-0.5)^2}{\delta^2 + 4\alpha t} \right).
\end{align}

For $a=0.5$, the numerical solution is under-diffused compared to the exact one, whatever the order of accuracy of the method. This is due to the non-negligible second-order consistency error in Knudsen number ($\varepsilon = 0.2$) which prevents us to converge to the right solution. However, when decreasing the Knudsen number to $\varepsilon=0.05$, a qualitatively good agreement of the second- and fourth-order schemes with the exact solution is observed. The first-order scheme results this time in an over-diffusion which can be attributed to numerical dissipation.

\begin{figure}[h!]
    \centering
    \begin{subfigure}{0.47\textwidth}
    \includegraphics{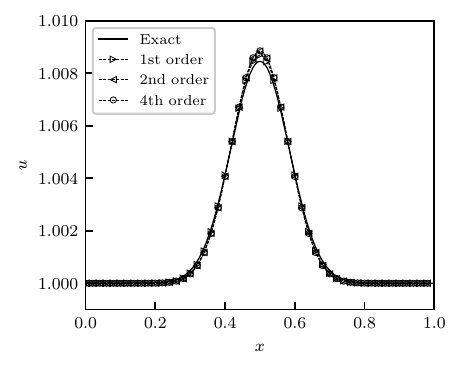}
    \caption{$a=0.5$ ($\varepsilon=0.2$)}
    \end{subfigure}
    \begin{subfigure}{0.47\textwidth}
    \includegraphics{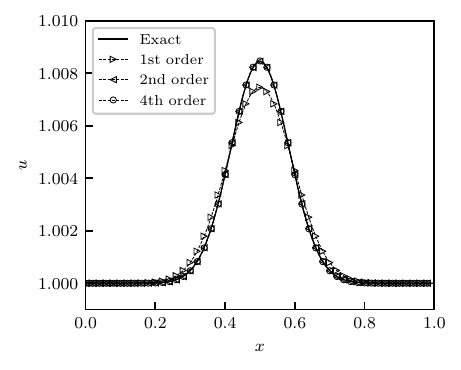}
    \caption{$a=2$ ($\varepsilon=0.05$)}
    \end{subfigure}
    \caption{Diffusion testcase of a Gaussian with $\alpha = 0.01$ at time $t=0.1$. Simulations are run with $100$ points for $x$ in $[0,1]$. Initial condition: $u_0(x) = 1+0.01\exp\left(-(x-0.5)^2/\delta^2 \right)$, $\delta = 0.1$. The effect of the change of wave velocity $a$ in the $\mathcal{O}(\varepsilon^2)$ consistency error is exhibited. Knudsen number is defined as $\varepsilon = \alpha/(a \delta)$.}
    \label{fig:Diffusion_plot}
\end{figure}

These observations can be quantified by performing a mesh convergence study for this test case at different values of $\varepsilon$ and measuring the $L^2$ error defined as
\begin{align}
    L^2 = \sqrt{\frac{\sum_i (u(x_i, T) - u_{exact}(x_i, T))^2}{\sum_i u_{exact}(x_i, T)^2}}.
\end{align}


\begin{figure}[h!]
    \begin{subfigure}{0.33\textwidth}
    \centering
    \includegraphics{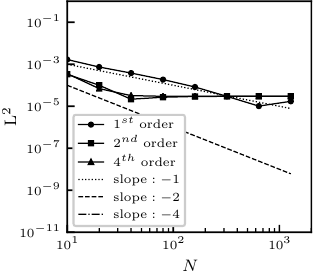}
    \caption{$a=1$ ($\varepsilon = 0.1$)}
    \end{subfigure}
    \begin{subfigure}{0.33\textwidth}
    \centering
    \includegraphics{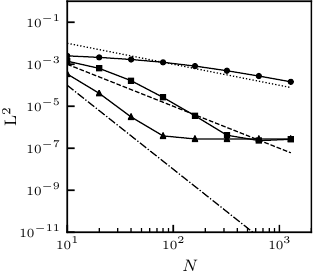}
    \caption{$a=10$ ($\varepsilon = 0.01$)}
    \end{subfigure}
    \begin{subfigure}{0.33\textwidth}
    \centering
    \includegraphics{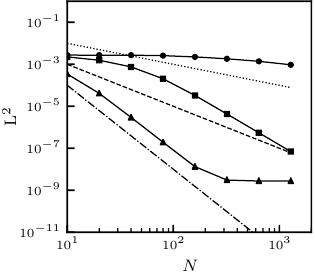}
    \caption{$a=100$ ($\varepsilon = 0.001$)}
    \end{subfigure}
    \caption{Mesh convergence study of the diffusion of an initial Gaussian shape with $\alpha=0.01$ at time $t=0.1$.}
    \label{fig:Order_Diffusion}
\end{figure}

\begin{table}[!ht]
\begin{subtable}{1.\textwidth}
\centering
\begin{tabular}{|p{0.05\textwidth}||>{\centering}p{0.16\textwidth}>{\centering}p{0.04\textwidth}|>{\centering}p{0.16\textwidth}>{\centering}p{0.04\textwidth}|>{\centering}p{0.16\textwidth}>{\centering\arraybackslash}p{0.04\textwidth}|}
\hline
 & \multicolumn{2}{|c|}{First-order} & \multicolumn{2}{c|}{Second-order} & \multicolumn{2}{c|}{Fourth-order} \\
\hline
$h$ & $L^2$ & $r$ & $L^2$ & $r$ & $L^2$ & $r$ \\
\hline
$40$ & $3.82224105\ 10^{-4}$ & - & $2.17227050\ 10^{-5}$ & - & $3.21519778\ 10^{-5}$ & - \\ 
$80$ & $1.86929660\ 10^{-4}$ & $1.03$ & $2.70144676\ 10^{-5}$ & $0.31$ & $2.99767769\ 10^{-5}$ & $0.10$ \\ 
$160$ & $8.28220611\ 10^{-5}$ & $1.17$ & $2.93761706\ 10^{-5}$ & $0.12$ & $2.98387193\ 10^{-5}$ & $0.01$ \\ 
$320$ & $2.96755607\ 10^{-5}$ & $1.48$ & $2.97652373\ 10^{-5}$ & $0.02$ & $2.98286604\ 10^{-5}$ & $0.00$ \\ 
$640$ & $1.02048119\ 10^{-5}$ & $1.54$ & $2.98200333\ 10^{-5}$ & $0.00$ & $2.98279336\ 10^{-5}$ & $0.00$ \\ 
$1280$ & $1.72434531\ 10^{-5}$ & $0.76$ & $2.98270094\ 10^{-5}$ & $0.00$ & $2.98278827\ 10^{-5}$ & $0.00$ \\ 
\hline
\end{tabular}
\caption{$a=1$ ($\varepsilon=0.1$)}
\end{subtable} \vspace{1mm} \\
\begin{subtable}{1.\textwidth}
\centering
\begin{tabular}{|p{0.05\textwidth}||>{\centering}p{0.16\textwidth}>{\centering}p{0.04\textwidth}|>{\centering}p{0.16\textwidth}>{\centering}p{0.04\textwidth}|>{\centering}p{0.16\textwidth}>{\centering\arraybackslash}p{0.04\textwidth}|}
\hline
 & \multicolumn{2}{|c|}{First-order} & \multicolumn{2}{c|}{Second-order} & \multicolumn{2}{c|}{Fourth-order} \\
\hline
$h$ & $L^2$ & $r$ & $L^2$ & $r$ & $L^2$ & $r$ \\
\hline
$40$ & $1.69529158\ 10^{-3}$ & - & $1.66059185\ 10^{-4}$ & - & $3.10403849\ 10^{-6}$ & - \\ 
$80$ & $1.24240266\ 10^{-3}$ & $0.45$ & $2.65664031\ 10^{-5}$ & $2.64$ & $3.89321748\ 10^{-7}$ & $3.00$ \\ 
$160$ & $8.22062271\ 10^{-4}$ & $0.60$ & $3.53718313\ 10^{-6}$ & $2.91$ & $2.78850475\ 10^{-7}$ & $0.48$ \\ 
$320$ & $4.94223899\ 10^{-4}$ & $0.73$ & $4.19746412\ 10^{-7}$ & $3.08$ & $2.74425526\ 10^{-7}$ & $0.02$ \\ 
$640$ & $2.75837103\ 10^{-4}$ & $0.84$ & $2.32517873\ 10^{-7}$ & $0.85$ & $2.74112551\ 10^{-7}$ & $0.00$ \\ 
$1280$ & $1.46530118\ 10^{-4}$ & $0.91$ & $2.66541049\ 10^{-7}$ & $0.20$ & $2.74087795\ 10^{-7}$ & $0.00$ \\ 
\hline
\end{tabular}
\caption{$a=10$ ($\varepsilon = 0.01$)}
\end{subtable} \\ \vspace{1mm} \\
\begin{subtable}{1.\textwidth}
\centering
\begin{tabular}{|p{0.05\textwidth}||>{\centering}p{0.16\textwidth}>{\centering}p{0.04\textwidth}|>{\centering}p{0.16\textwidth}>{\centering}p{0.04\textwidth}|>{\centering}p{0.16\textwidth}>{\centering\arraybackslash}p{0.04\textwidth}|}
\hline
 & \multicolumn{2}{|c|}{First-order} & \multicolumn{2}{c|}{Second-order} & \multicolumn{2}{c|}{Fourth-order} \\
\hline
$h$ & $L^2$ & $r$ & $L^2$ & $r$ & $L^2$ & $r$ \\
\hline
 $40$ & $2.71252475\ 10^{-3}$ & - & $7.52506198\ 10^{-4}$ & - & $2.95995370\ 10^{-6}$ & - \\ 
$80$ & $2.57969774\ 10^{-3}$ & $0.07$ & $2.04614875\ 10^{-4}$ & $1.88$ & $1.94227156\ 10^{-7}$ & $3.93$ \\ 
$160$ & $2.24326510\ 10^{-3}$ & $0.20$ & $3.28330005\ 10^{-5}$ & $2.64$ & $1.33685037\ 10^{-8}$ & $3.86$ \\ 
$320$ & $1.82794212\ 10^{-3}$ & $0.30$ & $4.30495069\ 10^{-6}$ & $2.93$ & $3.06947772\ 10^{-9}$ & $2.12$ \\ 
$640$ & $1.38363511\ 10^{-3}$ & $0.40$ & $5.46683143\ 10^{-7}$ & $2.98$ & $2.75468087\ 10^{-9}$ & $0.16$ \\ 
$1280$ & $9.48036540\ 10^{-4}$ & $0.55$ & $6.96842330\ 10^{-8}$ & $2.97$ & $2.73986802\ 10^{-9}$ & $0.01$ \\ 
\hline
\end{tabular}
\caption{$a=100$ ($\varepsilon = 0.001$)}
\end{subtable}
\caption{Orders of convergence for the diffusion problem and two-wave model for orders 1, 2
and 4. The final time is $t=0.1$ and the diffusion parameter is $\alpha=0.01$. The wave velocity $a$ is varied to exhibit its effect on the $\mathcal{O}(\varepsilon^2)$ consistency error, which appears as a plateau in the $L^2$ error of the high-order schemes.}
\label{tab:Order_Diffusion}
\end{table}

Convergence results of the $L^2$ errors obtained for meshes ranging from $N=10$ to $N=1280$ points and for three values of the Knudsen number are compiled in Table~\ref{tab:Order_Diffusion} and Fig~\ref{fig:Order_Diffusion}. The following observations can be drawn:
\begin{itemize}
    \item For a given Knudsen number, a plateau is systematically reached whatever the numerical method used, indicating a consistency error. The value of this plateau decreases as the Knudsen number decreases, which is in agreement with a $\mathcal{O}(\varepsilon^2)$ error.
    \item The numerical error of the first-order scheme increases as the Knudsen number decreases in agreement with the observations of Fig.~\ref{fig:Diffusion_plot}. Second- and fourth-order schemes do not seem to be affected by such a discrepancy.
    \item Interestingly, the second-order scheme seems to be hyper-convergent and exhibits a $(-3)$-slope when the Knudsen number is sufficiently small.
\end{itemize}

An asymptotic study of the consistency error is also performed on this test case. To this extent, simulations are done with the fourth-order scheme on a fine mesh with $1000$ points in order to get rid of numerical errors, and the Knudsen number is varied from $0.2$ to $0.00625$. $L^2$ errors and computed slopes $r$ are compiled in Table~\ref{tab:Order_Kn_Diffusion}. As expected, a $\mathcal{O}(\varepsilon^2)$ consistency error is exhibited.

\begin{table}[!ht]
\centering
\begin{tabular}{|p{0.03\textwidth}|>{\centering}p{0.13\textwidth}>{\centering}p{0.13\textwidth}>{\centering}p{0.13\textwidth}>{\centering}p{0.13\textwidth}>{\centering}p{0.13\textwidth}>{\centering\arraybackslash}p{0.13\textwidth}|}
\hline
$a$ & $0.5$ & $1$ & $2$ & $4$ & $8$ & $16$ \\
\hline
$\varepsilon$ & $0.2$ & $0.1$ & $0.05$ & $0.025$ & $0.0125$ & $0.00625$ \\
\hline 
$L^2$ & $1.397226\ 10^{-4}$ & $2.982789\ 10^{-5}$ & $6.982914\ 10^{-6}$ & $1.720013\ 10^{-6}$ & $4.284500\ 10^{-7}$ & $1.070190\ 10^{-7}$ \\
$r$ & - & $2.23$ & $2.09$ & $2.02$ & $2.01$ & $2.00$ \\
\hline
\end{tabular}
\caption{Asymptotic study of the consistency error in Knudsen number $\varepsilon$ of the diffusion of a Gaussian. Initial condition: $u_0(x) = 1+0.01\exp\left(-(x-0.5)^2/\delta^2 \right)$. Simulations are performed for $x$ in $[0,1]$ with $\delta=0.1$ and $\alpha=0.01$ up to time $t=0.1$. In order to get rid of numerical errors, a fine mesh of 1000 points is considered and simulations are performed with the fourth-order scheme. Knudsen number is defined as $\varepsilon = \alpha/(a \delta)$.}
\label{tab:Order_Kn_Diffusion}
\end{table}


\subsection{Advection-diffusion equation}

We now consider the advection-diffusion equation for which we set: $f(u)=cu$, where $c$ is a constant advection velocity. In the following, we reproduce the same test case as with the diffusion equation and set $c=10$ so that one cycle is made in the periodic domain at $t=0.1$. Note that the sub-characteristic conditions yields $a > 10$, so that, with $\alpha=0.01$, the Knudsen number is restricted to
\begin{align}
    \varepsilon < 0.01.
\end{align}
We see that in this case, the subcharacteristic condition is restrictive and allows us to a priori reasonably neglect the second-order error in $\varepsilon$. Fig.~\ref{fig:ADE_plot} displays the numerical solution obtained at $t=0.1$ with $N=100$ points for two values of $a$ satisfying the subcharacteristic condition: $a=12$ and $a=100$. The CFL numbers are given for each case in Table~\ref{tab:ADE_CFL}. With $a=12$, a good agreement of the second- and fourth-order methods is obtained with the exact solution, while the first-order one is more dissipative. With $a=100$, a similar observation as in Fig.~\ref{fig:Diffusion_plot} can be drawn: an increase of $a$ leads to an increase of the numerical error, especially for the first- and second-order method. With the fourth-order method, a good agreement is still observed with the exact solution.

\begin{table}[!ht]
\centering
\begin{tabular}{|p{0.05\textwidth}|>{\centering}p{0.1\textwidth}||>{\centering}p{0.15\textwidth}>{\centering}p{0.15\textwidth}>{\centering\arraybackslash}p{0.15\textwidth}|}
\hline
$a$ & $\varepsilon$ & CFL ($1^{st}$ order) & CFL ($2^{nd}$ order) & CFL ($4^{th}$ order) \\
\hline
$12$ & $0.0083$ & 0.83 & 0.67 & 1.67 \\
$100$ & $0.001$ & 0.1 & 0.08 & 0.2 \\
\hline
\end{tabular}
\caption{CFL numbers ($=c\Delta t/\Delta x$) for each case of Fig.~\ref{fig:ADE_plot}.}
\label{tab:ADE_CFL}
\end{table}

\begin{figure}[h!]
    \centering
    \begin{subfigure}{0.47\textwidth}
    \includegraphics{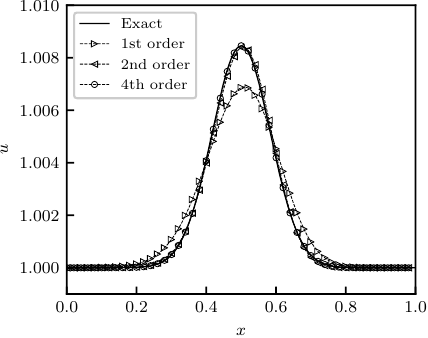}
    \caption{$a=12$ ($\varepsilon \approx 0.0083$)}
    \end{subfigure}
    \begin{subfigure}{0.47\textwidth}
    \includegraphics{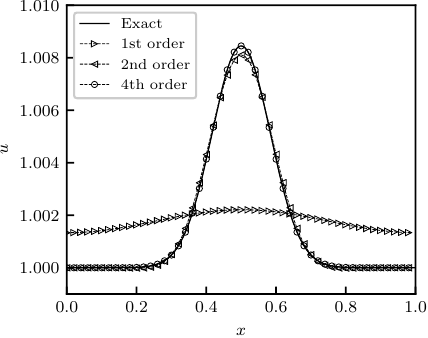}
    \caption{$a=100$ ($\varepsilon=0.001$)}
    \end{subfigure}
    \caption{Advection-diffusion testcase of a Gaussian with $c=10$ and $\alpha = 0.01$ at time $T=0.1$. Simulations are run with $100$ points for $x$ in $[0,1]$. Initial condition: $u_0(x) = 1+0.01\exp\left(-(x-0.5)^2/\delta^2 \right)$, $\delta = 0.1$.}
    \label{fig:ADE_plot}
\end{figure}

A mesh convergence study of this case is displayed in Fig.~\ref{fig:Order_ADE}, where the $L^2$ error is computed with the advected exact solution at time $t=0.005$. Similar observations as with the diffusion test case can be drawn: (1) a plateau is observed, whose value decreases when $\varepsilon$ decreases, (2) the numerical error of the first-order scheme increases when $\varepsilon$ decreases, (3) before reaching the plateau, the second-order scheme is hyperconvergent for $a=100$. Furthermore, the mesh convergence study performed in the inviscid case ($\alpha = 0$) with $a=12$ illustrates the asymptotic preservation of the method: no consistency error is observed in this case and the expected orders of convergence are correctly recovered. Quantitative results for this study are provided in Table~\ref{tab:Order_ADE}.

\begin{table}[!ht]
\begin{subtable}{1.\textwidth}
\centering
\begin{tabular}{|p{0.05\textwidth}||>{\centering}p{0.16\textwidth}>{\centering}p{0.04\textwidth}|>{\centering}p{0.16\textwidth}>{\centering}p{0.04\textwidth}|>{\centering}p{0.16\textwidth}>{\centering\arraybackslash}p{0.04\textwidth}|}
\hline
 & \multicolumn{2}{|c|}{First-order} & \multicolumn{2}{c|}{Second-order} & \multicolumn{2}{c|}{Fourth-order} \\
\hline
$h$ & $L^2$ & $r$ & $L^2$ & $r$ & $L^2$ & $r$ \\
\hline
$10$ & $7.06259159\ 10^{-4}$ & - & $5.28384218\ 10^{-4}$ & - & $5.81954092\ 10^{-4}$ & - \\ 
$20$ & $2.66013227\ 10^{-4}$ & $1.41$ & $1.30814558\ 10^{-4}$ & $2.01$ & $8.58949563\ 10^{-5}$ & $2.76$ \\ 
$40$ & $1.41383049\ 10^{-4}$ & $0.91$ & $3.53263068\ 10^{-5}$ & $1.89$ & $1.49109831\ 10^{-5}$ & $2.53$ \\ 
$80$ & $6.38941863\ 10^{-5}$ & $1.15$ & $1.18083985\ 10^{-5}$ & $1.58$ & $2.11115064\ 10^{-6}$ & $2.82$ \\ 
$160$ & $3.14424035\ 10^{-5}$ & $1.02$ & $5.30506366\ 10^{-6}$ & $1.15$ & $3.06479347\ 10^{-6}$ & $0.54$ \\ 
$320$ & $1.49633159\ 10^{-5}$ & $1.07$ & $3.65810496\ 10^{-6}$ & $0.54$ & $3.14047790\ 10^{-6}$ & $0.04$ \\ 
$640$ & $7.42427193\ 10^{-6}$ & $1.01$ & $3.26536195\ 10^{-6}$ & $0.16$ & $3.14562008\ 10^{-6}$ & $0.00$ \\ 
$1280$ & $4.11655580\ 10^{-6}$ & $0.85$ & $3.17390836\ 10^{-6}$ & $0.04$ & $3.14596946\ 10^{-6}$ & $0.00$ \\ 
\hline
\end{tabular}
\caption{$\alpha=0.01$, $a=12$ ($\varepsilon \approx 0.0083$)}
\end{subtable} \\ \vspace{1mm} \\
\begin{subtable}{1.\textwidth}
\centering
\begin{tabular}{|p{0.05\textwidth}||>{\centering}p{0.16\textwidth}>{\centering}p{0.04\textwidth}|>{\centering}p{0.16\textwidth}>{\centering}p{0.04\textwidth}|>{\centering}p{0.16\textwidth}>{\centering\arraybackslash}p{0.04\textwidth}|}
\hline
 & \multicolumn{2}{|c|}{First-order} & \multicolumn{2}{c|}{Second-order} & \multicolumn{2}{c|}{Fourth-order} \\
\hline
$h$ & $L^2$ & $r$ & $L^2$ & $r$ & $L^2$ & $r$ \\
\hline
$10$ & $2.49426713\ 10^{-3}$ & - & $1.38058136\ 10^{-3}$ & - & $5.81522375\ 10^{-4}$ & - \\ 
$20$ & $2.06086273\ 10^{-3}$ & $0.28$ & $6.56298958\ 10^{-4}$ & $1.07$ & $7.94325044\ 10^{-5}$ & $2.87$ \\ 
$40$ & $1.57008142\ 10^{-3}$ & $0.39$ & $1.68404811\ 10^{-4}$ & $1.96$ & $6.20792400\ 10^{-6}$ & $3.68$ \\ 
$80$ & $1.08437731\ 10^{-3}$ & $0.53$ & $2.65974144\ 10^{-5}$ & $2.66$ & $4.01522662\ 10^{-7}$ & $3.95$ \\ 
$160$ & $6.77927748\ 10^{-4}$ & $0.68$ & $3.48950919\ 10^{-6}$ & $2.93$ & $1.73049491\ 10^{-8}$ & $4.54$ \\ 
$320$ & $3.89725252\ 10^{-4}$ & $0.80$ & $4.46811263\ 10^{-7}$ & $2.97$ & $1.33013666\ 10^{-8}$ & $0.38$ \\ 
$640$ & $2.11113250\ 10^{-4}$ & $0.88$ & $7.16664468\ 10^{-8}$ & $2.64$ & $1.45034304\ 10^{-8}$ & $0.12$ \\ 
$1280$ & $1.10230164\ 10^{-4}$ & $0.94$ & $2.90811569\ 10^{-8}$ & $1.30$ & $1.45847006\ 10^{-8}$ & $0.01$ \\ 
\hline
\end{tabular}
\caption{$\alpha=0.01$, $a=100$ ($\varepsilon =0.001$)}
\end{subtable} \\ \vspace{1mm} \\
\begin{subtable}{1.\textwidth}
\centering
\begin{tabular}{|p{0.05\textwidth}||>{\centering}p{0.16\textwidth}>{\centering}p{0.04\textwidth}|>{\centering}p{0.16\textwidth}>{\centering}p{0.04\textwidth}|>{\centering}p{0.16\textwidth}>{\centering\arraybackslash}p{0.04\textwidth}|}
\hline
 & \multicolumn{2}{|c|}{First-order} & \multicolumn{2}{c|}{Second-order} & \multicolumn{2}{c|}{Fourth-order} \\
\hline
$h$ & $L^2$ & $r$ & $L^2$ & $r$ & $L^2$ & $r$ \\
\hline
$10$ & $7.29336300\ 10^{-4}$ & - & $5.43958682\ 10^{-4}$ & - & $5.84820313\ 10^{-4}$ & - \\ 
$20$ & $2.91964957\ 10^{-4}$ & $1.32$ & $1.39511213\ 10^{-4}$ & $1.96$ & $8.88746838\ 10^{-5}$ & $2.72$ \\ 
$40$ & $1.58939923\ 10^{-4}$ & $0.88$ & $3.03970848\ 10^{-5}$ & $2.20$ & $1.44536417\ 10^{-5}$ & $2.62$ \\ 
$80$ & $7.30983364\ 10^{-5}$ & $1.12$ & $7.75398598\ 10^{-6}$ & $1.97$ & $1.09585300\ 10^{-6}$ & $3.72$ \\ 
$160$ & $3.64904450\ 10^{-5}$ & $1.00$ & $1.96831591\ 10^{-6}$ & $1.98$ & $7.47569261\ 10^{-8}$ & $3.87$ \\ 
$320$ & $1.77341239\ 10^{-5}$ & $1.04$ & $4.94284325\ 10^{-7}$ & $1.99$ & $4.86169624\ 10^{-9}$ & $3.94$ \\ 
$640$ & $8.85906942\ 10^{-6}$ & $1.00$ & $1.23712855\ 10^{-7}$ & $2.00$ & $3.13704351\ 10^{-10}$ & $3.95$ \\ 
$1280$ & $4.39631371\ 10^{-6}$ & $1.01$ & $3.09370856\ 10^{-8}$ & $2.00$ & $1.96590137\ 10^{-11}$ & $4.00$ \\ 
\hline
\end{tabular}
\caption{$\alpha=0$, $a=12$ ($\varepsilon =0$)}
\end{subtable}
\caption{Orders of convergence for the advection-diffusion problem and two-wave model for orders 1, 2
and 4. The final time is $T=0.005$. The wave velocity $a$ is varied to exhibit its effect on the $\mathcal{O}(\varepsilon^2)$ consistency error, which appears as a plateau in the $L^2$ error of the high-order schemes.}
\label{tab:Order_ADE}
\end{table}

\begin{figure}[h!]
    \centering
    \begin{subfigure}{0.32\textwidth}
    \centering
    \includegraphics{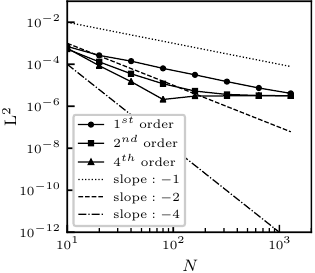}
    \caption{$\alpha=0.01$, $a=12$ ($\varepsilon \approx 0.0083$)}
    \end{subfigure}
    \begin{subfigure}{0.32\textwidth}
    \centering
    \includegraphics{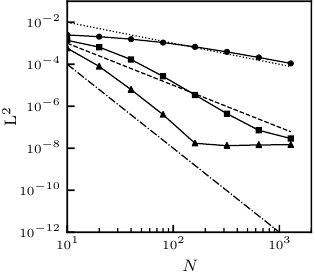}
    \caption{$\alpha=0.01$, $a=100$ ($\varepsilon = 0.001$)}
    \end{subfigure}
    \begin{subfigure}{0.32\textwidth}
    \centering
    \includegraphics{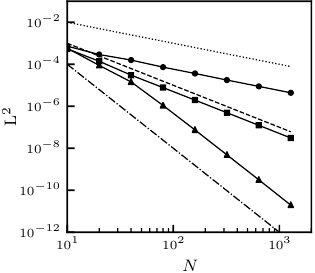}
    \caption{$\alpha=0$, $a=12$ ($\varepsilon = 0$)}
    \end{subfigure}
    \caption{Mesh convergence study of the advection-diffusion of an initial Gaussian shape at time $t=0.005$.}
    \label{fig:Order_ADE}
\end{figure}

Finally, an asymptotic study of the consistency error is also performed on this test case. Results are displayed in Table~\ref{tab:Order_Kn_ADE}. As for the diffusion equation, a clear $(-2)$-slope is observed in agreement with the expected $\mathcal{O}(\varepsilon^2)$ consistency error.

\begin{table}[!ht]
\centering
\begin{tabular}{|p{0.03\textwidth}|>{\centering}p{0.13\textwidth}>{\centering}p{0.13\textwidth}>{\centering}p{0.13\textwidth}>{\centering}p{0.13\textwidth}>{\centering}p{0.13\textwidth}>{\centering\arraybackslash}p{0.135\textwidth}|}
\hline
$a$ & $12$ & $24$ & $48$ & $96$ & $192$ & $384$ \\
\hline
$\varepsilon$ & $0.2$ & $0.1$ & $0.05$ & $0.025$ & $0.0125$ & $0.00625$ \\
\hline 
$L^2$ & $3.145929\ 10^{-6}$ & $3.024941\ 10^{-7}$ & $6.548333\ 10^{-8}$ & $1.582960\ 10^{-8}$ & $3.915344\ 10^{-9}$ & $9.667367\ 10^{-10}$ \\
$r$ & - & $3.38$ & $2.21$ & $2.05$ & $2.02$ & $2.02$ \\
\hline
\end{tabular}
\caption{Asymptotic study of the consistency error in Knudsen number $\varepsilon$ of the advection-diffusion of a Gaussian. Initial condition: $u_0(x) = 1+0.01\exp\left(-(x-0.5)^2/\delta^2 \right)$. Simulations are performed for $x$ in $[0,1]$ with $\delta=0.1$, $c=10$ and $\alpha=0.01$ up to time $t=0.005$. In order to get rid of numerical errors, a fine mesh of 1000 points is considered and simulations are performed with the fourth-order scheme.}
\label{tab:Order_Kn_ADE}
\end{table}

\subsection{Viscous Burgers equation}

We now want to solve the viscous Burgers equation, for which we set: $f(u) = u^2/2$. In this case, the sub-characteristic condition reads
\begin{align}
    a > \max_i|u(x_i)|.
\end{align}
Hence, contrary to the diffusion and advection-diffusion cases where a constant value of $a$ could be prescribed, it is here expected to vary over time. For this reason, the ratio $a/\max |u|$ will be prescribed in this section.

\subsubsection{Steady shock}

The first test case is a steady ``shock'' whose exact solution is given by~\cite{Benton1972}
\begin{align}
    u_{exact}(x) = - \frac{2\alpha}{\delta} \tanh ((x-L/2)/\delta),
\end{align}
where $\delta$ is the characteristic width of the shock. For this case, the Knudsen number is defined from \eqref{eq:def_knudsen_scalar} with $\ell = \delta$. We consider a domain of length $L=1$ discretized with $N=300$ points and set $\alpha=0.001$ and $\delta=0.01$. In order to evaluate the ability of the numerical method to converge towards the exact solution, we use a slightly modified initial condition:
\begin{align}
    u(x, 0) = -\frac{2 \alpha}{\delta} \tanh ((x-0.5) 10/\delta).
\end{align}
Dirichlet boundary conditions are used where distribution functions are simply set to the Maxwellian state corresponding to $u(x=0)=0.2$ on the left boundary and $u(x=1)=-0.2$ on the right boundary. Fig.~\ref{fig:Burgers_steadyshock} displays the numerical solutions obtained when time convergence is achieved for two ratios $a/\max|u|$. In the first case, the Knudsen number is $\varepsilon \approx 0.45$ so that the $\mathcal{O}(\varepsilon^2)$ cannot be neglected, which results in a mismatch with the exact solution. However, when $a$ increases, the Knudsen number can be artificially decreased so that a good agreement is observed with the exact solution for the second- and fourth-order schemes. Again, note that the numerical error of the first-order method considerably increases when $a$ increases.  

\begin{figure}[h!]
    \centering
    \begin{subfigure}{0.47\textwidth}
    \includegraphics{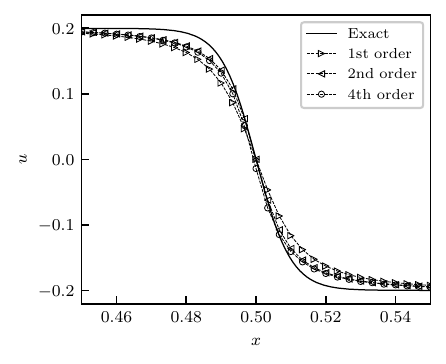}
    \caption{$a=1.1 \max(|u|)$ ($\varepsilon \approx 0.45$)}
    \end{subfigure}
    \begin{subfigure}{0.47\textwidth}
    \includegraphics{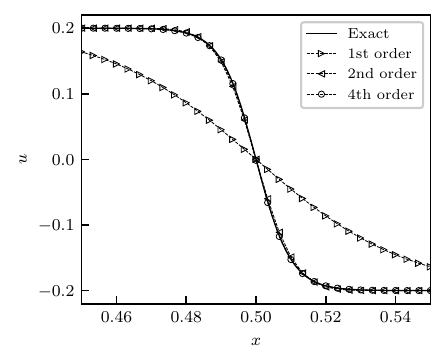}
    \caption{$a=10 \max(|u|)$ ($\varepsilon = 0.05$)}
    \end{subfigure}
    \caption{Steady ``shock'' testcase with the viscous Burgers equation with $\alpha = 0.001$. Simulations are run with $300$ points for $x$ in $[0,1]$. Exact solution: $u_{exact}(x)=-2 \alpha /\delta \tanh((x-0.5)/\delta)$, $\delta=1/100$. Initial condition: $u(x, 0) = -2 \alpha /\delta \tanh((x-0.5)\, 10/\delta)$. The Knudsen number is defined as $\varepsilon = \alpha/(a \delta)$.}
    \label{fig:Burgers_steadyshock}
\end{figure}

\subsubsection{Sinusoidal initialization}

We now consider a sinusoidal initialization of the $L=1$ domain as
\begin{align}
    u(x,0) = 0.5 + \sin(2\pi x).
\end{align}
The diffusion parameter is set to $\alpha = 0.01$ and $N=100$ points with periodic boundary conditions are considered for this case. This initialization is known to give birth to a viscous ``shock'' wave. An exact solution is given by~\cite{Benton1972} as
\begin{align}
    u_{exact}(x,t) = 0.5 + 2 \alpha \pi \ \frac{ \displaystyle 4 \sum_{n=1}^\infty n a_n e^{-4 \pi^2 \alpha n^2 t} \sin (2\pi n (x-0.5 t))}{\displaystyle  a_0 + 2 \sum_{n=1}^\infty a_n e^{-4\pi^2 \alpha n^2 t} \cos(2\pi n (x-0.5t))},
\end{align}
where
\begin{align}
    a_n = (-1)^n I_n \left( -\frac{1}{4 \pi \alpha} \right),
\end{align}
and where $I_n$ is the $n^{th}$-order exponentially scaled modified Bessel function of the first kind. In the following, we will consider the  first $100$ terms in the series, which provides us an accurate approximation of the exact solution. Numerical solutions obtained at time $t=0.5$ are displayed in Fig.~\ref{fig:Burgers_sin} and compared with the exact one. At this instant, a characteristic length of the viscous shock width can be built by measuring the distance between the maximal and the minimal values of the exact solution: $\delta \approx 0.12$. This characteristic length is used for the definition of the Knudsen number in \eqref{eq:def_knudsen_scalar}. Similar observations as for the steady viscous shock can be drawn.

\begin{figure}[h!]
    \centering
    \begin{subfigure}{0.47\textwidth}
    \includegraphics{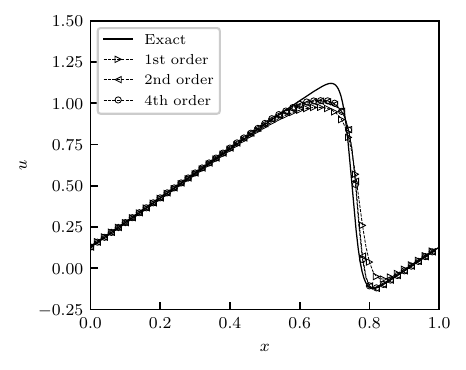}
    \caption{$a=1.1 \max(|u|)$ ($\varepsilon \approx 0.068$)}
    \end{subfigure}
    \begin{subfigure}{0.47\textwidth}
    \includegraphics{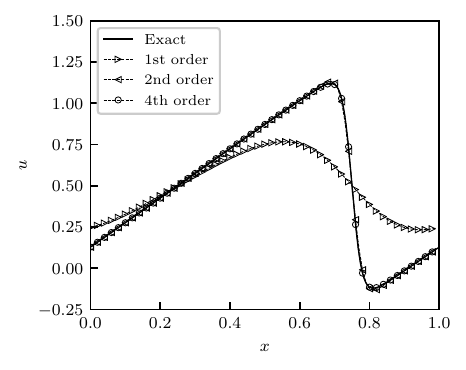}
    \caption{$a=10 \max(|u|)$ ($\varepsilon \approx 0.0074$)}
    \end{subfigure}
    \caption{Burgers equation with $\alpha = 0.01$ with the initial condition $u_0(x)=0.5+\sin(2\pi x)$ at $t=0.5$. Simulations are run with $100$ points for $x$ in $[0,1]$. Exact solution from~\cite{Benton1972}.}
    \label{fig:Burgers_sin}
\end{figure}

\section{Navier-Stokes equations for fluid dynamics}
\label{sec:application_NS}

\subsection{Model}

We now consider the 1D Navier-Stokes equations for fluid dynamics for which we have $p=3$, $\mathbf{u}^\varepsilon = [\rho, j, E]^T$ where $\rho$ is the density of mass, $j$ is the momentum and $E$ is the total energy by unit of mass. The convective flux is given by
\begin{align}
    f(\mathbf{u}^\varepsilon) = [j, j^2/\rho + P, (E+P)j/\rho]^T,
\end{align}
where $P$ is the thermodynamic pressure, related to $(\rho, E)$ by the ideal gas equation of state: $P = (\gamma-1) (E-j^2/(2\rho))$ and $\gamma$ is the heat capacity ratio of the gas. The diffusion matrix is given by:
\begin{align}
    \mathbf{D} = \nu
    \begin{bmatrix}
        0 & 0 & 0 \\
        -4/3 u & 4/3 & 0 \\
        -4/3 u^2 + \gamma/\mathrm{Pr}(u^2-E/\rho) & 4/3 u - \gamma u/\mathrm{Pr} & \gamma/\mathrm{Pr}
    \end{bmatrix},
\end{align}
where $u=j/\rho$ is the fluid velocity, $\nu=\mu/\rho$ is the kinematic viscosity, $\mu$ is the constant dynamic viscosity, $\mathrm{Pr}$ is the Prandtl number defined as
\begin{align}
    \mathrm{Pr} = \frac{\mu \gamma R}{\lambda (\gamma-1)},
\end{align}
$R$ is the gas constant and $\lambda$ is the thermal conductivity of the fluid. Note that this choice of $\mathbf{D}$ matrix is in line with the 1D projection of the 3D Navier-Stokes equations, for which a viscous stress tensor is defined as $\sigma = 4/3 \mu \partial_x u$. This matrix is diagonalizable with three non-negative eigenvalues that can be used to define a local Knudsen number: $(0, 4 \nu/3,  \gamma \nu/\mathrm{Pr})$. Also note that since there is no diffusion on the mass equation, $\mathbf{D}$ is not invertible. The use of a Lobato IIIC scheme as in section \ref{sec:RK} is therefore of paramount importance for this system of equations.

The two-wave model of Example~\ref{ex:Euler} is considered. The sub-characteristic condition is sufficient to make this model compatible with entropy inequalities. It reads
\begin{align}
    a > \max\limits_i(|u_i| + c_i),
\end{align}
where $c_i=\sqrt{\gamma P_i/\rho_i}$ is the sound speed and the index $i$ indicates here the discrete point in space. The inverse collision matrix is computed thanks to \eqref{Condition:T} and the Knudsen number is defined following \eqref{eq:def_epsilon} as
\begin{align}
	\label{eq:def_Knudsen_NS}
	\varepsilon = \frac{\mu}{a \ell \rho_c},
\end{align}
where $\ell$ is a characteristic length and $\rho_c$ a characteristic density. These parameters depend on the problem under consideration and will be provided for each of the test cases investigated below.

\subsection{Linear acoustics}

We first assess the ability of the model to deal with acoustic waves propagation in the linear approximation. To this extent, we assume that the solution of the Navier-Stokes equations has the form $\mathbf{u} (x, t) = \overline{\mathbf{u}} + \tilde{\mathbf{u}} (x,t)$, where $\overline{\mathbf{u}}$ is a mean base flow, constant in time and space, and $\tilde{\mathbf{u}}$ is a local perturbation of the flow. Assuming that $\tilde{\mathbf{u}} \ll \overline{\mathbf{u}}$, the Navier-Stokes equation can be linearized as
\begin{align}
    \frac{\partial \tilde{\mathbf{u}}}{\partial t} + \mathbf{f}'(\overline{\mathbf{u}}) \frac{\partial \tilde{\mathbf{u}}}{\partial x} = \mathbf{D}(\overline{\mathbf{u}}) \frac{\partial^2 \tilde{\mathbf{u}}}{\partial x^2}.
    \label{eq:linearized_NS}
\end{align}
We then assume that the perturbations are complex plane monochromatic waves: $\tilde{\mathbf{u}} = \hat{\mathbf{u}} \exp(\mathrm{i} (kx-\omega t))$, where $\hat{\mathbf{u}}$ is the complex amplitude of the wave, $k \in \mathbb{R}$ its wavenumber and $\omega \in \mathbb{C}$ its complex pulsation. Injecting this perturbation in Eq.~(\ref{eq:linearized_NS}) leads to the following eigenvalue problem:
\begin{align}
    \omega \tilde{\mathbf{u}} = \left[ k \mathbf{f}'(\overline{\mathbf{u}}) - \mathrm{i} k^2 \mathbf{D}(\overline{\mathbf{u}}) \right] \tilde{\mathbf{u}}.
    \label{eq:NS_eigenvalue_problem}
\end{align}
Solving this eigenvalue problem leads to the knowledge of eigenvectors of the flow $\hat{\mathbf{u}}$ and corresponding complex eigenvalues $\omega$ whose real part (resp. imaginary part) characterizes the propagation (resp. the temporal amplification) of the wave.

In the present study, we set $\overline{\mathbf{u}} = [\overline{\rho}, \overline{\rho} \overline{u}, \overline{P}/(\gamma-1) + \overline{\rho} \overline{u}^2/2]^T$ with $\overline{\rho} = 1$, $\overline{P} = 1$ and $\overline{u} = 2 \overline{c} = 2 \sqrt{\gamma}$ with $\gamma = 1.4$, in order to assess the ability of the model to simulate supersonic flows. Other parameters are: $\mu = 0.001$, $\mathrm{Pr} = 0.71$ and $k=2\pi$. A $L=1$-length 1D domain with periodic boundary conditions is initialized as follows: the eigenvalue problem of Eq.~(\ref{eq:NS_eigenvalue_problem}) is solved in order to retain the eigenvalue $\omega$ whose real part is the closest to $\overline{u}+\overline{c}$. By this procedure, a downstream acoustic wave can be isolated. The corresponding eigenvector $\hat{\mathbf{u}}$ is normalized such that $\phi(\hat{\rho})=0$, where $\phi(\hat{\rho})$ is the phase of the complex number $\hat{\rho}$ and $|\hat{\rho}| = 0.00001$ to satisfy the linear approximation, and the domain is initialized as
\begin{align}
    \mathbf{u}(x,0) = \overline{\mathbf{u}} + |\hat{\mathbf{u}}| \cos( kx + \phi(\hat{\mathbf{u}})).
\end{align} 
The numerical solution is to be compared with the exact one in the linear approximation:
\begin{align}
    \mathbf{u}_{exact}(x,t) = \overline{\mathbf{u}} + |\hat{\mathbf{u}}| \cos( kx - \mathrm{Re}(\omega) t + \phi(\hat{\mathbf{u}})) e^{\mathrm{Im}(\omega) t}.
\end{align}
For this case, the Knudsen number is defined using \eqref{eq:def_Knudsen_NS} with $\rho_c = \overline{\rho} = 1$ and $\ell = 1/k$.
Fig.~\ref{fig:Order_Acoustic_NS} displays the mesh convergence of the $L^2$ error in the density field measured at time $t=0.005$ in three cases: 
\begin{itemize}
	\item[(a)] $\mu = 0.001$, $a=1.1 \max(|u|+c)$ (corresponding to $\varepsilon \approx 0.0016$),
	\item[(b)] $\mu = 0.001$, $a=10 \max(|u|+c)$ (corresponding to $\varepsilon \approx 0.00018$),
	\item[(c)] $\mu=0$, $a=1.1 \max(|u|+c)$ (corresponding to $\varepsilon = 0$).
\end{itemize}
Similar conclusion can be drawn as in the advection-diffusion test case. Notably, the consistency error exhibited in case (a) is reduced by decreasing the Knudsen number as done in case (b). Furthermore, a decrease of $\varepsilon$ also leads to an increase in the $L^2$ error of the first-order scheme, and to a hyper-convergence of the second-order scheme. Also note that a consistency error remains in the Euler case (c). This is likely to be due to the linear assumption which is no more valid at these scales. Quantitative results of this convergence study are provided in Table~\ref{tab:Order_Acoustic_NS}.

\begin{figure}[h!]
    \centering
    \begin{subfigure}{0.32\textwidth}
    \centering
    \includegraphics{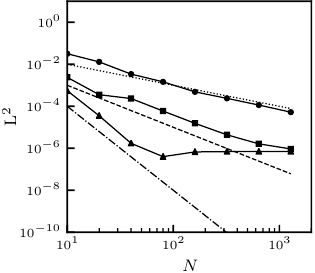}
    \caption{$\mu=0.001$, $a=1.1\ \max(|u|+c)$}
    \end{subfigure}
    \begin{subfigure}{0.32\textwidth}
    \centering
    \includegraphics{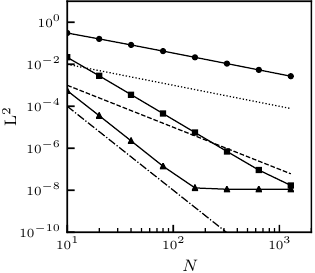}
    \caption{$\mu=0.001$, $a=10\ \max(|u|+c)$}
    \end{subfigure}
    \begin{subfigure}{0.32\textwidth}
    \centering
    \includegraphics{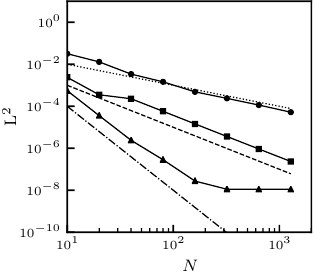}
    \caption{$\mu=0$, $a=1.1\ \max(|u|+c)$}
    \end{subfigure}
    \caption{Mesh convergence study of the acoustic propagation test case with the Navier-Stokes model at time $t=0.005$. Legend is similar as in Fig.~\ref{fig:Order_ADE}.}
    \label{fig:Order_Acoustic_NS}
\end{figure}

\begin{table}[!ht]
\begin{subtable}{1.\textwidth}
\centering
\begin{tabular}{|p{0.05\textwidth}||>{\centering}p{0.16\textwidth}>{\centering}p{0.04\textwidth}|>{\centering}p{0.16\textwidth}>{\centering}p{0.04\textwidth}|>{\centering}p{0.16\textwidth}>{\centering\arraybackslash}p{0.04\textwidth}|}
\hline
 & \multicolumn{2}{|c|}{First-order} & \multicolumn{2}{c|}{Second-order} & \multicolumn{2}{c|}{Fourth-order} \\
\hline
$h$ & $L^2$ & $r$ & $L^2$ & $r$ & $L^2$ & $r$ \\
\hline
$10$ & $3.18358494\ 10^{-2}$ & - & $2.38762319\ 10^{-3}$ & - & $5.52173687\ 10^{-4}$ & - \\ 
$20$ & $1.29954354\ 10^{-2}$ & $1.29$ & $3.56024569\ 10^{-4}$ & $2.75$ & $3.52472725\ 10^{-5}$ & $3.97$ \\ 
$40$ & $3.41042646\ 10^{-3}$ & $1.93$ & $2.33950628\ 10^{-4}$ & $0.61$ & $1.72009705\ 10^{-6}$ & $4.36$ \\ 
$80$ & $1.46187956\ 10^{-3}$ & $1.22$ & $5.94565368\ 10^{-5}$ & $1.98$ & $4.00382425\ 10^{-7}$ & $2.10$ \\ 
$160$ & $4.86899419\ 10^{-4}$ & $1.59$ & $1.54170403\ 10^{-5}$ & $1.95$ & $6.69901954\ 10^{-7}$ & $0.74$ \\ 
$320$ & $2.38695684\ 10^{-4}$ & $1.03$ & $4.38835080\ 10^{-6}$ & $1.81$ & $6.94795015\ 10^{-7}$ & $0.05$ \\ 
$640$ & $1.14526343\ 10^{-4}$ & $1.06$ & $1.62700122\ 10^{-6}$ & $1.43$ & $6.96842556\ 10^{-7}$ & $0.00$ \\ 
$1280$ & $5.24287832\ 10^{-5}$ & $1.13$ & $9.32826480\ 10^{-7}$ & $0.80$ & $6.96971908\ 10^{-7}$ & $0.00$ \\ 
\hline
\end{tabular}
\caption{$\mu=0.001$, $a=1.1\ \max(|u|+c)$ ($\varepsilon \approx 0.0016$)}
\end{subtable} \\ \vspace{1mm} \\
\begin{subtable}{1.\textwidth}
\centering
\begin{tabular}{|p{0.05\textwidth}||>{\centering}p{0.16\textwidth}>{\centering}p{0.04\textwidth}|>{\centering}p{0.16\textwidth}>{\centering}p{0.04\textwidth}|>{\centering}p{0.16\textwidth}>{\centering\arraybackslash}p{0.04\textwidth}|}
\hline
 & \multicolumn{2}{|c|}{First-order} & \multicolumn{2}{c|}{Second-order} & \multicolumn{2}{c|}{Fourth-order} \\
\hline
$h$ & $L^2$ & $r$ & $L^2$ & $r$ & $L^2$ & $r$ \\
\hline
$10$ & $3.08147409\ 10^{-1}$ & - & $2.13753486\ 10^{-2}$ & - & $5.52863168\ 10^{-4}$ & - \\ 
$20$ & $1.61484438\ 10^{-1}$ & $0.93$ & $2.83112661\ 10^{-3}$ & $2.92$ & $3.57950633\ 10^{-5}$ & $3.95$ \\ 
$40$ & $8.33846856\ 10^{-2}$ & $0.95$ & $3.58675606\ 10^{-4}$ & $2.98$ & $2.25533475\ 10^{-6}$ & $3.99$ \\ 
$80$ & $4.24743215\ 10^{-2}$ & $0.97$ & $4.49949168\ 10^{-5}$ & $2.99$ & $1.39777272\ 10^{-7}$ & $4.01$ \\ 
$160$ & $2.14510843\ 10^{-2}$ & $0.99$ & $5.63735081\ 10^{-6}$ & $3.00$ & $1.29570311\ 10^{-8}$ & $3.43$ \\ 
$320$ & $1.07810600\ 10^{-2}$ & $0.99$ & $7.09224153\ 10^{-7}$ & $2.99$ & $1.11811543\ 10^{-8}$ & $0.21$ \\ 
$640$ & $5.40483745\ 10^{-3}$ & $1.00$ & $9.16743840\ 10^{-8}$ & $2.95$ & $1.12763310\ 10^{-8}$ & $0.01$ \\ 
$1280$ & $2.70600638\ 10^{-3}$ & $1.00$ & $1.71030799\ 10^{-8}$ & $2.42$ & $1.12809781\ 10^{-8}$ & $0.00$ \\ 
\hline
\end{tabular}
\caption{$\mu=0.001$, $a=10\ \max(|u|+c)$ ($\varepsilon \approx 0.00018$)}
\end{subtable} \\ \vspace{1mm} \\
\begin{subtable}{1.\textwidth}
\centering
\begin{tabular}{|p{0.05\textwidth}||>{\centering}p{0.16\textwidth}>{\centering}p{0.04\textwidth}|>{\centering}p{0.16\textwidth}>{\centering}p{0.04\textwidth}|>{\centering}p{0.16\textwidth}>{\centering\arraybackslash}p{0.04\textwidth}|}
\hline
 & \multicolumn{2}{|c|}{First-order} & \multicolumn{2}{c|}{Second-order} & \multicolumn{2}{c|}{Fourth-order} \\
\hline
$h$ & $L^2$ & $r$ & $L^2$ & $r$ & $L^2$ & $r$ \\
\hline 
$10$ & $3.18450250\ 10^{-2}$ & - & $2.38857430\ 10^{-3}$ & - & $5.52869492\ 10^{-4}$ & - \\ 
$20$ & $1.29973579\ 10^{-2}$ & $1.29$ & $3.52777872\ 10^{-4}$ & $2.76$ & $3.59285924\ 10^{-5}$ & $3.94$ \\ 
$40$ & $3.41064593\ 10^{-3}$ & $1.93$ & $2.28925579\ 10^{-4}$ & $0.62$ & $2.39953322\ 10^{-6}$ & $3.90$ \\ 
$80$ & $1.46214532\ 10^{-3}$ & $1.22$ & $5.77456106\ 10^{-5}$ & $1.99$ & $2.84941443\ 10^{-7}$ & $3.07$ \\ 
$160$ & $4.86913078\ 10^{-4}$ & $1.59$ & $1.45129893\ 10^{-5}$ & $1.99$ & $2.74795789\ 10^{-8}$ & $3.37$ \\ 
$320$ & $2.38642326\ 10^{-4}$ & $1.03$ & $3.65387033\ 10^{-6}$ & $1.99$ & $1.12293677\ 10^{-8}$ & $1.29$ \\ 
$640$ & $1.14456539\ 10^{-4}$ & $1.06$ & $9.24355743\ 10^{-7}$ & $1.98$ & $1.10562078\ 10^{-8}$ & $0.02$ \\ 
$1280$ & $5.23548707\ 10^{-5}$ & $1.13$ & $2.35439111\ 10^{-7}$ & $1.97$ & $1.10582163\ 10^{-8}$ & $0.00$ \\ 
\hline
\end{tabular}
\caption{$\mu=0$, $a=1.1\ \max(|u|+c)$ ($\varepsilon =0$)}
\end{subtable}
\caption{Quantitative results of the $L^2$ errors shown in Fig.~\ref{fig:Order_Acoustic_NS}.}
\label{tab:Order_Acoustic_NS}
\end{table}

Similarly to what is proposed in Sec.~\ref{sec:application_scalars}, an asymptotic study in Knudsen number is then performed on a fine mesh of $N=1000$ points with the fourth-order model in order to get rid of numerical errors. The dynamic viscosity is set to $\mu = 0.1$ so that consistency errors in $\mathcal{O}(\varepsilon^2)$ are expected to be much larger than errors attributed to the linear approximation. Results shown in Table~\ref{tab:Order_Kn_NS} exhibits an effective second-order slope in $\varepsilon$.

\begin{table}[!ht]
\centering
\begin{tabular}{|p{0.13\textwidth}|>{\centering}p{0.11\textwidth}>{\centering}p{0.11\textwidth}>{\centering}p{0.11\textwidth}>{\centering}p{0.11\textwidth}>{\centering}p{0.11\textwidth}>{\centering\arraybackslash}p{0.135\textwidth}|}
\hline
$a/ \max(|u|+c)$ & $1.1$ & $2.2$ & $4.4$ & $8.8$ & $17.6$ & $35.2$ \\
\hline
$\varepsilon$ & $0.16$ & $0.08$ & $0.04$ & $0.02$ & $0.01$ & $0.005$ \\
\hline 
$L^2$ & $4.6585\ 10^{-4}$ & $2.8028\ 10^{-4}$ & $9.7336\ 10^{-5}$ & $2.5826\ 10^{-5}$ & $6.5393\ 10^{-6}$ & $1.6399\ 10^{-6}$ \\
$r$ & - & $0.73$ & $1.53$ & $1.91$ & $1.98$ & $2.00$ \\
\hline
\end{tabular}
\caption{Asymptotic study of the consistency error in Knudsen number $\varepsilon$ of an acoustic wave with the Navier-Stokes model. Simulations are performed with $\mu=0.1$ up to time $t=0.005$. In order to get rid of numerical errors, a fine mesh of 1000 points is considered and simulations are performed with the fourth-order scheme.}
\label{tab:Order_Kn_NS}
\end{table}

\subsection{Viscous steady shock}

We consider a steady viscous shock whose left and right state obey the following Rankine-Hugoniot relations:
\begin{align}
    (\rho, u, P)_L = (1, \mathrm{Ma}\sqrt{\gamma}, 1), \qquad (\rho, u, P)_R = \left (1/\theta, \theta \mathrm{Ma} \sqrt{\gamma}, \frac{\gamma+1 - \theta(\gamma-1)}{\theta(\gamma+1)-(\gamma-1)} \right),
\end{align}
where $\gamma=1.4$, $\mathrm{Ma}$ is the Mach number upstream of the shock and 
\begin{align}
    \theta = \frac{\gamma-1}{\gamma +1} + \frac{2}{(\gamma+1)\mathrm{Ma}^2}.
\end{align}
\clearpage

In the particular case $\mathrm{Pr}=3/4$, the 1D Navier-Stokes equations can be analytically solved to obtain an exact solution of the viscous shock profile~\cite{Zeldovich1967}. The latter reads
\begin{align}
    x = -\frac{8 \sqrt{\gamma} \mu}{3(\gamma+1)\mathrm{Ma}} \left[ \frac{\theta}{1-\theta} \log \left( \frac{v-\theta}{u_{in}-\theta} \right) - \frac{1}{1-\theta} \log \left( \frac{1-v}{1-u_{in}} \right) \right],
    \label{eq:viscous_shock_exact}
\end{align}
where $v=1/\rho$ and $u_{in}=(1+\theta)/2$ is the velocity at $x=0$. In the following, we set $\mu = 0.001$. Inverting Eq.~(\ref{eq:viscous_shock_exact}) allows us to compute the density profile, from which pressure, velocity and entropy $s$ can be computed as
\begin{align}
    & p = \frac{1}{v} \left( 1 + \frac{\gamma-1}{2} \mathrm{Ma}^2 (1-v^2) \right), \\
    & u = v \mathrm{Ma} \sqrt{\gamma}, \\
    & \eta=\eta_0 \log (p/\rho^\gamma),
\end{align}
where $\eta_0 = 1/(\gamma-1)$. A characteristic length related to the shock width can be defined as~\cite{Zeldovich1967}
\begin{align}
    \delta = \frac{2 \mathrm{Ma}}{\mathrm{Ma}^2-1} \mu \sqrt{\pi/2},
\end{align}
and, following \eqref{eq:def_Knudsen_NS}, the Knudsen number is defined as
\begin{align}
    \varepsilon = \frac{\mu}{a \delta},
\end{align}
where the density of the left state ($\rho=1$) has been considered as characteristic density $\rho_c$. A one-dimensional domain is initialized with
\begin{align}
    (\rho, u, P)(x,0) = \frac{1}{2}\left[ (\rho, u, P)_L + (\rho, u, P)_R \right] + \frac{1}{2} \left[ (\rho, u, P)_R - (\rho, u, P)_L \right] \tanh (x/(2\delta)).
\end{align}
The mesh size is $\Delta x = \delta/10$ and the length of the computational domain is $L=250 \delta$, large enough so that interactions with the boundary conditions (here imposed as Dirichlet boundaries) can be neglected when time convergence is reached.

Fig.~\ref{fig:ViscousShock_M2} displays the entropy profiles obtained for $\mathrm{Ma}=2$ in two cases: (a) $a=1.1 \max(|u|+c)$ and (b) $a=10 \max(|u|+c)$. They respectively correspond to $\varepsilon \approx 0.16$ and $\varepsilon \approx 0.017$. The profiles obtained with the second- and fourth-order schemes in Fig.~\ref{fig:ViscousShock_M2_a1.1} are in good agreement with the exact solution, except left of the peak where a slight overestimation of the entropy is obtained. This can be attributed to the consistency error, which is supported by Fig.~\ref{fig:ViscousShock_M2_a10} where a better agreement is obtained after reducing the Knudsen number.

\begin{figure}[h!]
    \centering
    \begin{subfigure}{0.47\textwidth}
    \includegraphics{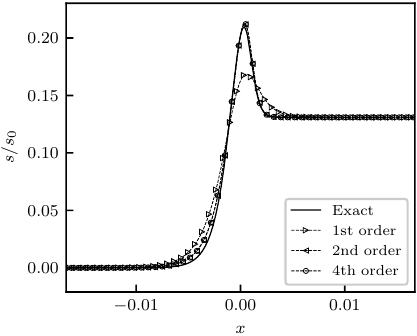}
    \caption{$a=1.1 \max(|u|+c)$ ($\varepsilon \approx 0.16$) \label{fig:ViscousShock_M2_a1.1}}
    \end{subfigure}
    \begin{subfigure}{0.47\textwidth}
    \includegraphics{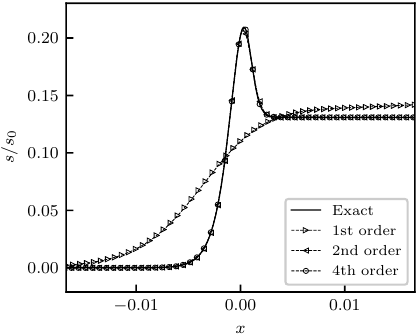}
    \caption{$a=10 \max(|u|+c)$ ($\varepsilon \approx 0.017$) \label{fig:ViscousShock_M2_a10}}
    \end{subfigure}
    \caption{Viscous shock with the Navier-Stokes model, $\mu=0.001$, $\mathrm{Pr} = 3/4$, $\gamma=1.4$. The Mach number is $\mathrm{Ma} = 2$.}
    \label{fig:ViscousShock_M2}
\end{figure}

A similar simulation performed at $\mathrm{Ma}=10$ is displayed in Fig.~\ref{fig:ViscousShock_M10} to illustrate the robustness and accuracy of the method for high Mach number flows. We can see that the consistency error observed in Fig.~\ref{fig:ViscousShock_M10_a1.1} is larger than in Fig.~\ref{fig:ViscousShock_M2_a1.1}, which can be attributed to a larger Knudsen number at this high Mach number. Still increasing $a$ to $10 \max(|u|+c)$ allows reducing the consistency error and leads to a very good agreement of the second- and fourth-order methods with the Navier-Stokes solution. 

\begin{figure}[h!]
    \centering
    \begin{subfigure}{0.47\textwidth}
    \includegraphics{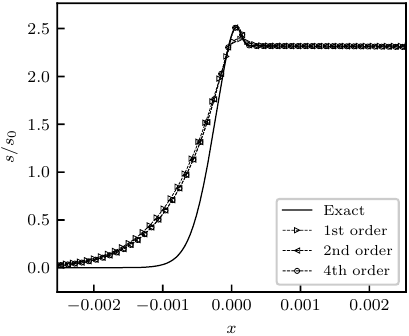}
    \caption{$a=1.1 \max(|u|+c)$ ($\varepsilon \approx 0.28$) \label{fig:ViscousShock_M10_a1.1} }
    \end{subfigure}
    \begin{subfigure}{0.47\textwidth}
    \includegraphics{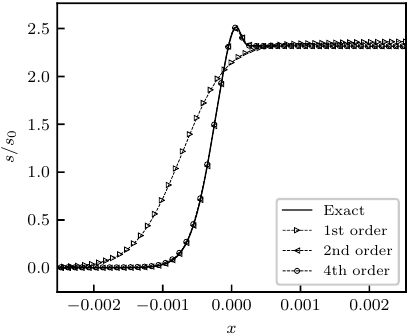}
    \caption{$a=10 \max(|u|+c)$ ($\varepsilon \approx 0.03$)}
    \end{subfigure}
    \caption{Viscous shock with the Navier-Stokes model, $\mu=0.001$, $\mathrm{Pr} = 3/4$, $\gamma=1.4$. The Mach number is $\mathrm{Ma} = 10$. \label{fig:ViscousShock_M10_a10}}
    \label{fig:ViscousShock_M10}
\end{figure}

A final discussion can be held regarding the validity of the Navier-Stokes solution for such a simulation. It is well known that the Navier-Stokes equations are no more valid for simulating hypersonic flows, for which large off-equilibrium phenomena have to be considered. In fact, it is even not valid to correctly simulate the shock width in the case $\mathrm{Ma}=2$. This is due to the fact that the Navier-Stokes equations are only valid as long as the representative length scale of the problem is much larger than the mean free path of the particles. This assumption is commonly referred to as the continuum assumption. However, the characteristic width of a shock is precisely in the order of magnitude of the mean free path. Hence, the Navier-Stokes equations themselves may not be valid for the viscous shock simulations performed in this section, especially in the case $\mathrm{Ma}=10$, so that the consistency error obtained with the kinetic models may not be so problematic. To be specific, regarding Fig.~\ref{fig:ViscousShock_M2_a1.1} and Fig.~\ref{fig:ViscousShock_M10_a1.1}, it is not sure that the exact Navier-Stokes solution is more representative of the physics than the one obtained by the kinetic model:  they both share a $\mathcal{O}(\varepsilon^2)$ error with the kinetic theory of gases.

\section{Conclusion}
We have presented a framework that allows us to approximate the solution of convection-diffusion like problems using a kinetic approach. Linear and non-linear examples are considered and discussed, including the Navier-Stokes equations. The strategy adopted here considerably differs from previous work, where the convection-diffusion PDE is recovered in the limit of a relaxation parameter $\varepsilon \rightarrow 0$, and where kinetic velocities scaling as $\mathcal{O}(1/\varepsilon)$ are often to be considered. In the present work, we do not look at the formal limit $\varepsilon \rightarrow 0$, but perform an asymptotic expansion for small values of $\varepsilon$ in order to match the diffusive flux of the PDE at first-order in $\varepsilon$. This framework, very different from the previous work, is motivated by the kinetic theory of gases, where the NS equations are not a limit of the BGK equation but a correction of the Euler equations at first-order in the Knudsen number. This approach notably requires a proper definition of the Knudsen number on a case by case basis, to measure how the relaxation parameter can be reasonably considered small. The price to pay is that the expected PDE is recovered up to a consistency error scaling as $\mathcal{O}(\varepsilon^2)$.

Once the model is set up, we discuss in length how to discretize it with arbitrary order, in time and space. First-, second-, and fourth-order methods are provided, and the expected orders of accuracy are recovered until the consistency error. Interestingly, we show how the latter can be arbitrarily reduced by increasing the velocity norm of the kinetic model, which is a free parameter as far as the subcharacteristic condition is satisfied. In this regard, the method we propose may seem not so different from previous work: the consistency error vanishes, i.e. the PDE is \textit{exactly} solved, in the limit of infinitely large kinetic velocities. The key point is to accept the existence of the consistency error and to control it in order to build methods that are able to approximate a given linear or non-linear partial differential equation with a given accuracy.

So far the method is described for one dimensional problems. The extension to several dimensions is in progress and will be the topic of a future publication.
\section*{Acknowledgements}
Lorenzo Micalizzi is gratefully acknowledged for fruitful discussions regarding DeC methods. GW has been funded by SNFS grants \# 
200020\_204917 ``Structure preserving and fast methods for hyperbolic systems of conservation laws'' and 
FZEB-0-166980.

\bibliographystyle{unsrt}
\bibliography{main6}
\end{document}